\newtheorem{thm}{Theorem}
\newtheorem{cor}[thm]{Corollary}
\newtheorem{lem}[thm]{Lemma}
\newtheorem{rem}{Remark}
\theoremstyle{definition}
\newtheorem{defn}{Definition}
\theoremstyle{example}
\newtheorem{exm}{Example}
\newtheorem{prop}[thm]{Proposition}
\journal{https://arxiv.org}
\begin{document}

\begin{frontmatter}

\title{\Large A convexity of functions on convex
metric spaces of Takahashi and applications}

\author{Ahmed A. Abdelhakim}
\begin{abstract}
We quickly review and make some comments on the concept of convexity in metric spaces due to Takahashi. Then we introduce a concept of convex structure based convexity to functions on these spaces and refer to it as $W-$convexity. $W-$convex functions generalize convex functions on linear spaces. We discuss illustrative examples of
(strict) $ W-$convex functions and dedicate
the major part of this paper to proving a variety of properties that make them fit in very well with the classical theory of convex analysis. Finally, we apply some of our results to the metric projection problem and fixed point theory.
\end{abstract}

\begin{keyword}
Convex metric space\sep $W-$convex functions\sep
metric projection
\MSC[2010] 26A51, 52A01, 46N10, 47N10
\end{keyword}

\end{frontmatter}
\section{Introduction and preliminaries}
There have been a few attempts
to introduce the structure
of convexity outside linear spaces.
Kirk \cite{kirk1,kirk2}, Penot \cite{Penot} and Takahashi \cite{Takahashi}, for example,
presented notions of convexity for sets in metric spaces. Even in the more general setting of topological spaces there is the work
of  Liepi\c{n}\v{s} \cite{Liepi} and
Taskovi\v{c} \cite{Taskovi}. Takahashi \cite {Takahashi}
introduced a general concept of
convexity that gave rise to what he referred to as convex metric spaces.
\begin{defn}\label{def1}(\cite{Takahashi}) Let $(X,d)\,$ be a metric space and
$I=[0,1].$ A continuous function
$\emph{W}:X\times X\times I\,\rightarrow\, X$ is said to be a
\emph{convex structure} on $X$ if for each $x,y\in X$ and all $
 t  \in I$,
\begin{eqnarray}\label{concon}
    d\left(u,\emph{W}(x,y; t  )\right)&\leq& (1- t )\, d(u,x)+ t\,  d(u,y)
\end{eqnarray}
for all $u\in X$. A metric space $(X,d)$ with a convex structure  ${W}$ is called a \emph{convex metric space} and is
denoted by $(X, W,d)$. A subset $C$ of $X$ is called
\emph{convex} if $\,W (x,y; t  )\in C\,$
whenever $\,x,y \in C\,$ and $\, t \in I$.
\end{defn}
What makes Takahashi's notion of convexity solid
is the invariance under taking intersections and
convexity of closed balls (\cite{Takahashi}, Propositions 1 and 2).
The convex structure $W$ in Definition \ref{def1} has the following property which is stated in \cite{Takahashi} without proof.
For the sake of completeness, we give a proof of it here.
\begin{lem}\label{113}
For any $x,y$ in a convex metric space $(X,W,d)$ and any $\, t \in I$ we have
\begin{eqnarray*}
      d\left(x,\emph{W}(x,y; t  )\right)\,=\, t \,d(x,y),\quad
  d\left(y,\emph{W}(x,y; t  )\right)\,=\,(1- t )\, d(x,y).
\end{eqnarray*}
\end{lem}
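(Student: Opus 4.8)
The plan is to extract both inequalities directly from the defining property \eqref{concon} of the convex structure by making two judicious choices of the free point $u$, and then to close the gap with a single application of the triangle inequality.

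First I would set $u = x$ in \eqref{concon}. Since $d(x,x) = 0$, this immediately gives $d\left(x, W(x,y;t)\right) \leq t\, d(x,y)$. Symmetrically, taking $u = y$ and using $d(y,y) = 0$ yields $d\left(y, W(x,y;t)\right) \leq (1-t)\, d(x,y)$. So one half of each claimed equality — the ``$\leq$'' direction — is essentially free.

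Next I would invoke the triangle inequality on the three points $x$, $W(x,y;t)$, $y$:
\[
d(x,y) \;\leq\; d\left(x, W(x,y;t)\right) + d\left(W(x,y;t), y\right) \;\leq\; t\, d(x,y) + (1-t)\, d(x,y) \;=\; d(x,y).
\]
Hence the chain of inequalities collapses to equalities. In particular both intermediate estimates must be tight, which forces $d\left(x, W(x,y;t)\right) = t\, d(x,y)$ and $d\left(y, W(x,y;t)\right) = (1-t)\, d(x,y)$, as desired.

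There is no real obstacle here; the only point requiring a word of care is the passage from ``the sum of the two upper bounds equals $d(x,y)$'' to ``each bound is attained,'' which is just the elementary fact that if $a \le A$, $b \le B$ and $a + b \ge A + B$ then $a = A$ and $b = B$. I would state that explicitly to keep the argument self-contained. The continuity hypothesis on $W$ plays no role in this particular lemma.
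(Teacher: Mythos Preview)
Your proposal is correct and follows essentially the same route as the paper: obtain the two upper bounds from \eqref{concon} with $u=x$ and $u=y$, then use the triangle inequality to force equality in both. The paper phrases the final step as a contradiction (if $a<t\,d(x,y)$ then $a+b<d(x,y)$), whereas you state the equivalent elementary fact directly, but this is only a cosmetic difference.
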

\begin{proof}
For simplicity, let $a$, $b$ and $c$
stand for $d\left(x,\emph{W}(x,y; t  )\right)$,
$d\left(y,\emph{W}(x,y; t  )\right)$ and $d(x,y)$ respectively.
Using (\ref{concon}) we get $\,\,a\leq \, t \, c\,\,$
and $\,\,b\leq \,(1- t )\,c$. But $\,\,c\,\leq \,a+b\,\,$ by
the triangle inequality. So
$\,\,c\,\leq\, a+b \,\leq \,(1- t ) \,c +
 t \,c=c.\,$
This means  $\,a+b=c\,$. If $\,a<t\,c\,$
then we would have $\,a+b<c\,$ which is a contradiction.
Therefore, we must have
$\,a= t \,c\,$ and consequently
 $\,b= (1- t )\, c.$
\end{proof}
The necessity for the condition (\ref{concon}) on $W$ to be a convex structure on a metric space $(X,d)$ is natural. To see this, assume that $ ( X, \parallel . \parallel_{X} ) $ is a normed linear space. Then the mapping $\,W:X\times X\times I\,\rightarrow\, X\;$ given by
\begin{align}\label{cs}
W(x,y; t  )=(1- t )\,x+ t\,y, \quad x,y \in X,\;\;  t  \in I,
\end{align}
defines a convex structure on $X$.
Indeed, if $\rho$ is the metric induced by the norm $\,\parallel . \parallel_{X}\,$ then
\begin{eqnarray*}
\rho \left(u,W(x,y; t )\right )&=&
\parallel u-\left((1- t )x + t  y \right) \parallel_{X} \\
&\leq&
(1- t ) \parallel u-x\parallel_{X} + \,
 t \parallel u-y\parallel_{X}\\
 &=&
(1- t )\,\rho(u,x) +  t \,\rho(u,y),\qquad \forall u\in X,\,  t \in I.
\end{eqnarray*}
\indent The picture gets clearer in the linear space ${\mathbb{R}}^2$ with the Euclidean metric
and the convex structure given by (\ref{cs}).
In this case, given two points $\,x,y\in {\mathbb{R}}^2\,$ and a $\, t  \in I$,
$\;z=W(x,y; t  )$ is a point that lies on
the line segment joining $x$  and $y$. Moreover, Lemma \ref{113} implies that if $\,\overline{xy}=L\,$
then $\,\overline{xz}= t  \,L\,$ and $\,\overline{zy}=(1- t ) \,L\,$ and we arrive at an
interesting exercise of elementary trigonometry
to show that $\,\overline{uz}\,\leq\,
(1- t )\,\overline{ux}+ t \,\overline{uy}\,$
for any point $u$ in the plane.
(Hint: Apply the Pythagorean theorem to the triangles
$\,\triangle uyv$, $\,\triangle uvz$ and
$\,\triangle uvx$ in the figure below then use the fact that
$\,\overline{xy}\,\leq\,\overline{xu}+\overline{uy}$).
 \begin{align*}
&\qquad\qquad  \begin{tikzpicture} [scale=8]
\draw (-0.25, 0.15) node[below left]
{$\dfrac{\overline{xz}}{\overline{zy}}\,=\,
\dfrac{ t }{1- t }$};
     \draw (0.0, 0) -- (0.7, 0) node[below] {$y$};
     \draw (0, 0) node[below left] {$x$};
     \draw (0.5, 0.3) node[above right] {$u$};
     \draw (0.5, 0) node[below] {$v$};
   \draw[dotted]
    (0.5, 0) -- (0.5, 0.3);
   \draw (0.25, 0.0) node[below] {$z=W(x,y; t  )$};
    \draw  (0.5, 0.3) -- (0.25, 0.0);
     \draw  (0.5, 0.3) -- (0.0, 0.0);
     \draw  (0.5, 0.3) -- (0.7, 0.0);
\end{tikzpicture}
\end{align*}
\indent Takahashi's concept of convexity was used extensively in fixed point theory in metric spaces (cf. \cite{Tbk} and the references therein). One of its most important applications is probably iterative approximation of fixed points in metric spaces.
There is quite huge literature on
fixed point iterations
(cf. \cite{berinde,chidume}).
Roughly speaking,
the formation of most, if not all, known fixed point iterative procedures is based on that of the Mann iteration \cite {mann} and the Ishikawa iteration \cite{ishikawa} as its very first generalization. All of these sequences require linearity and convexity of the ambient topological space. Although Takahashi's notion of convex metric spaces appeared in 1970, it was
not until 1988 that  Ding \cite{ding} exploited  it to construct a fixed point iterative sequence and proved a convergence theorem in a convex metric space.
To our best knowledge, this is the first time
a fixed point iteration, other than the well-known Picard iteration, was introduced
to metric spaces. Later, a lot of strong convergence results in convex metric spaces followed (see \cite{berinde}).\\
\indent In the light of Definition \ref{def1},
it is tempting to identify convex functions
on convex metric spaces. Based on the idea of convex structures on metric spaces, we define
and illustrate by examples what we call $W-$convex functions.
In linear metric spaces
with $W$ defined by (\ref{cs}),  $W-$convex functions coincide with traditional convex functions.
We show throughout the paper
that many of the main properties
of convex functions on linear spaces
are satisfied by $W-$convex functions.
As expected some of these properties
do not carry over automatically from linear spaces to convex metric spaces. In order to
achieve such properties we had to
require additional assumptions on the convex
structure $W.$ For instance, while
midpoint convex continuous functions
on normed linear spaces are convex,
midpoint $W-$convexity on its own
seems insufficient to obtain an analogous
result in convex metric spaces.
Another example appears when we
study the equivalence between
local boundedness from above and local Lipschitz continuity of $W-$convex functions.
To achieve this equivalence
we required the convex metric space to
satisfy a certain property that is naturally
satisfied in any linear space.
Other properties necessitated
providing a suitable framework to prove.
For example, to investigate the relation
between $W-$convexity of functions and the convexity of their epigraphs, we had to design
a convex structure on product metric spaces
to be able to define convex product metric spaces
and characterize their convex subsets.
Finally, we apply some of our results on
$W-$convexity to the metric projection problem and fixed point theory. For this purpose, we give a definition for strictly convex metric spaces
that generalizes strict convexity in Banach spaces
and relate it to a certain class of strictly $W-$convex functions.
\section{\large $W-$convex functions
 on convex metric spaces and their main properties}
\begin{defn}\label{defnconv}
A realvalued function $f$ on a convex metric space
$\left(X,\emph{W},d\right)$ is $W-$convex if
for all $x,y\in X$ and $ t  \in I,\,$
$f\left(\emph{W}\left(x,y; t \right)\right)
\,\leq\,(1- t )f(x)+ t  f(y).$ We call $f$
strictly $W-$convex if $\,f\left(\emph{W}\left(x,y; t \right)\right)
\,<\,(1- t )f(x)+ t  f(y)\,$
for all distinct points $x, y\in X$
and every $ t \in I^{\tiny o}=\,]0,1[.$
\end{defn}
\begin{exm} Consider the Euclidean space $\mathbb{R}^{3}$
with the Euclidean norm $\parallel.\parallel$. Let $\mathcal{B}$ be the subset of $\,\mathbb{R}^{3}\,$
that consists of all closed balls $B(\xi,r)$ with center $\xi \in \mathbb{R}^{3}$ and radius $r>0$. For any two balls
$\,\displaystyle B(\xi_{1},r_{1}),\,B(\xi_{2},r_{2})\in \mathcal{B},\,$
define the distance function
$\, d_{\mathcal{B}}\,\displaystyle  \left(B(\xi_{1},r_{1})
\textbf{,}\,B(\xi_{2},r_{2})\right)\,
\,=\,\parallel \xi_{1}-\xi_{2} \parallel+|r_{1}-r_{2}|$.
It is easy to check that $(\mathcal{B},d_{\mathcal{B}})$ is a metric space.
Let $\,\displaystyle W_{\mathcal{B}}:\mathcal{B}\times \mathcal{B}\times I\,\rightarrow\, \mathcal{B}\,$ be the continuous mapping given by
\begin{align*}
\hspace{-1 cm}W_{\mathcal{B}} \left(B(\xi_{1},r_{1}),\,B(\xi_{2},r_{2});
\theta\right)=
B\left((1-\theta)\xi_{1}+\theta \xi_{2},\,
(1-\theta)\, r_{1}+\theta \,r_{2}\,\right),
\quad \xi_{i} \in \mathbb{R}^{3},\, r_{i} >0,\, \theta \in I.
\end{align*}
Since for all $\,\theta \in I\,$ and any
three balls $\,B(\xi_{i},r_{i})\in \mathcal{B},\,$
 $i=1,2,3$,
\begin{eqnarray*}
&&\hspace*{-2 cm} d_{\mathcal{B}}\, \left(W_{\mathcal{B}} \left(B(\xi_{1},r_{1})\textbf{,}\,B(\xi_{2},r_{2});
\theta\right)\textbf{,}\,B(\xi_{3},r_{3})\right)\\
&=&
d_{\mathcal{B}}\, \left( B\left((1-\theta)\,\xi_{1}+\theta \,\xi_{2}\textbf{,}\,
(1-\theta)\, r_{1}+\theta \,r_{2}\right)\textbf{,}\,B(\xi_{3},r_{3})\right)\\
&=&\parallel (1-\theta)\,\xi_{1}+\theta \,\xi_{2}-\xi_{3} \parallel+|(1-\theta)\, r_{1}+\theta \,r_{2}-r_{3}|\\
&\leq&(1-\theta)\,\left(\,\parallel \xi_{1}-\xi_{3}\parallel +| r_{1}-r_{3}|\,\right)+
\theta\,\left(\,\parallel \xi_{2}-\xi_{3}\parallel +| r_{2}-r_{3}|\,\right)
\\
&=&(1-\theta)\,d_{\mathcal{B}} \left(B(\xi_{1},r_{1})
\textbf{,}\,B(\xi_{3},r_{3})\right)+
\theta\,d_{\mathcal{B}} \left(B(\xi_{2},r_{2})
\textbf{,}\,B(\xi_{3},r_{3})\right).
\end{eqnarray*}
Then $\left(\mathcal{B},W_{\mathcal{B}},d_{\mathcal{B}}\right)$ is a convex metric space. The function
$ f:\mathcal{B}\rightarrow \mathbb{R}$ defined by
$f\left(B(\xi,r)\right) := \,\parallel \xi \parallel+|r|$ is $W_{\mathcal{B}}-$convex.
\end{exm}
\begin{exm}
Let $\mathcal{I}$ be the family of closed intervals
$ [a,b] $ such that $0\leq a\leq b\leq1$
and define the mapping
 $W_{\mathcal{I}}:\mathcal{I}\times \mathcal{I} \times I$
by $W_{\mathcal{I}}(I_{i},I_{j}; t ):= \left[\left(1- t \right)a_{i}+ t  a_{j},
 \left(1- t \right)b_{i}+ t  b_{j}\right] $ for $I_{i}=[a_{i},b_{i}],\,I_{j}=[a_{j},b_{j}]\in
 \mathcal{I},$ $ t  \in I.$
If $d_{\mathcal{I}}$ is the Hausdorff
distance then $\left(\mathcal{I},
W_{\mathcal{I}},d_{\mathcal{I}}\right)$
is a convex metric space. This example
of a convex metric space is
given by Takahashi \cite{Takahashi}.\\
It is easy to verify that the Lebesgue measure defines
a $W_{\mathcal{I}}-$convex function
on  $\left(\mathcal{I},W_{\mathcal{I}},
d_{\mathcal{I}}\right)$.
\end{exm}
\begin{prop}\label{excon3}
\emph{(Composition with increasing convex functions).}
Assume that $f$ is a $W_{X}-$convex
 function on the convex metric space
 $(X,W_{X},d_{X}).$ Let
 $g:f(X)\rightarrow \mathbb{R}$
 be increasing and convex in the usual sense. Then
$ g\circ f $ is $W_{X}-$convex on $X$.
The composition $ g\circ f $ is strictly
$W_{X}-$convex if $g$ is strictly convex
or if $f$ is strictly
$W_{X}-$convex and $g$ is strictly increasing.
\end{prop}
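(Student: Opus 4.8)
The plan is to deduce the statement from the defining inequality for $f$ together with two elementary facts: an increasing function preserves the relation $\leq$, and a convex function evaluated at a convex combination is bounded above by the convex combination of its values. First I would fix $x,y\in X$ and $t\in I$ and use the $W_X$-convexity of $f$ to obtain
\[
f\bigl(W_X(x,y;t)\bigr)\leq (1-t)\,f(x)+t\,f(y).
\]
Since $g$ is convex in the usual sense, its domain $f(X)$ is an interval, so the right-hand side above again lies in $f(X)$; applying first the monotonicity and then the convexity of $g$ gives
\[
g\bigl(f(W_X(x,y;t))\bigr)\leq g\bigl((1-t)f(x)+t\,f(y)\bigr)\leq (1-t)\,g(f(x))+t\,g(f(y)),
\]
which is exactly the $W_X$-convexity of $g\circ f$.

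For the strict assertion I would fix distinct $x,y\in X$ and $t\in I^{o}$ and determine which of the three inequalities in the chain above can be upgraded to a strict one. If $f$ is strictly $W_X$-convex and $g$ is strictly increasing, then the first inequality is strict, the strict monotonicity of $g$ carries that strictness through the second, and the third is needed only with plain convexity; hence the composite inequality is strict. If instead $g$ is strictly convex, the natural case split is on whether $f(x)=f(y)$: when $f(x)\ne f(y)$ the third inequality is strict by strict convexity of $g$ and we are done; when $f(x)=f(y)=:c$ the right-hand side collapses to $g(c)$ and one is reduced to deriving $g\bigl(f(W_X(x,y;t))\bigr)<g(c)$ from $f\bigl(W_X(x,y;t)\bigr)\leq c$. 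For the latter it is convenient to record the small fact that a non-decreasing strictly convex function on an interval is automatically strictly increasing, so the strict inequality follows as soon as $f\bigl(W_X(x,y;t)\bigr)<c$.

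The step I expect to be the real obstacle is precisely this residual subcase. A function that is $W_X$-convex without being strictly so may satisfy $f\bigl(W_X(x,y;t)\bigr)=f(x)=f(y)$ for some distinct $x,y$ and some $t\in I^{o}$ --- take, for example, $f(x_1,x_2)=x_1^{2}$ on $\mathbb{R}^{2}$ with the convex structure (\ref{cs}), at $x=(0,0)$ and $y=(0,1)$ --- and then $g\circ f$ need not be strictly $W_X$-convex even for a strictly convex, strictly increasing $g$. I would therefore expect the hypothesis ``$g$ strictly convex'' in this proposition to be applied only after a reduction to $f(x)\ne f(y)$, which is automatic under an injectivity-type assumption on $f$ (or is being tacitly assumed in that clause). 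Apart from this point, the proof is just the two-line monotonicity-and-convexity chain displayed above; the only further thing to keep explicit is that $f(X)$ must be an interval for ``$g$ convex in the usual sense'' and for the intermediate evaluation $g\bigl((1-t)f(x)+t\,f(y)\bigr)$ to make sense.
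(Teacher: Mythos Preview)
Your argument for the non-strict assertion is exactly the paper's: the author writes the same two-step chain
\[
g\bigl(f(W_X(x,y;t))\bigr)\leq g\bigl((1-t)f(x)+t\,f(y)\bigr)\leq (1-t)\,g(f(x))+t\,g(f(y)),
\]
and stops there. The paper does not address the strict clauses at all, nor the domain issue you raise about $f(X)$ being an interval; in both respects you are more careful than the original.

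Your analysis of the strict part is also correct, including the counterexample. With $X=\mathbb{R}^2$, $W$ as in (\ref{cs}), $f(x_1,x_2)=x_1^2$, $x=(0,0)$, $y=(0,1)$, one has $f(W(x,y;t))=f(x)=f(y)=0$ for every $t$, so $g\circ f$ is constant on $\mathcal{L}(x,y)$ regardless of how strictly convex $g$ is. Thus the clause ``$g\circ f$ is strictly $W_X$-convex if $g$ is strictly convex'' is false as stated; some additional hypothesis (for instance that $f$ is injective, or that $f(x)\neq f(y)$ whenever $x\neq y$, or that $f$ itself is strictly $W_X$-convex) is needed. Your treatment of the other clause --- $f$ strictly $W_X$-convex and $g$ strictly increasing --- is fine: the first inequality is strict, strict monotonicity transports it through the second, and ordinary convexity suffices for the third. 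The auxiliary observation that a non-decreasing strictly convex function on an interval is strictly increasing is also correct, though in view of the counterexample it cannot rescue the problematic subcase.
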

\begin{proof}
Given $x,y \in X$ and $ t  \in I$,
in the light of Definition \ref{defnconv},
it follows from the monotonicity of $g$ that
\begin{align*}
g\left( f \left(W_{X}\left(x,y; t \right)\right)\right)
\,\leq\,g\left((1- t )f(x)+ t  f(y)\right)
\,\leq\,(1- t )\, g  \left(f(x)\right)+ t \, g \left(f(y)\right).
\end{align*}
\end{proof}
\begin{exm}\label{excon4}
Let $(X,W_{X},d_{X})$ be a convex metric space and let
$g:\mathbb{R} \rightarrow \mathbb{R} $
be increasing and (strictly) convex. Then
the function $f:X \rightarrow \mathbb{R}$
 defined by $ f(x):=g\left(d_{X}(x,x_{0}) \right) $ for some fixed $x_{0} \in X$
is (strictly) $W_{X}-$convex. Examples of
the function $g$ include $g(x)=x,$
$g(x)=\chi_{[0,\infty[}(x)\,x^{2},$
$g(x)=\chi_{[0,\infty[}(x)\,|x|$
 in the case of convexity and
$g(x)=e^{x},$
 $ g(x)=\chi_{[0,\infty[}(x)\,|x|^{\alpha} $
with $ \alpha>1 $ in the case of strict convexity.
\end{exm}
\begin{prop}\label{8tms}
\leavevmode
Let $\left(X,W,d\right)$  be a convex metric space.  Then
\begin{enumerate}
  \item The restriction $g$ of a $W-$convex function $f$ on $X$ to a convex subset $C$
of $X$ is also $W-$convex.
  \item\label{2} If $f$ is a $W-$convex function on $X$ and
$\alpha \geq 0$ then $\alpha f$ is also a $W-$convex function on $X.$
  \item\label{3}  The finite sum of $\,W-$convex functions on $X$ is $W-$convex.
\item\label{4} Conical combinations of $\,W-$convex functions is again $W-$convex.
  \item The maximum of a finite number of $\,W-$convex functions is $W-$convex.
  \item The pointwise limit of a sequence of $\,W-$convex functions is $W-$convex.
  \item Suppose that $\left(Y_{n}\right)$ is a sequence of convex subsets of $X$
and that $f_{n}$ is a $W-$convex function on $Y_{n},$ $n\geq 1.$ Let $S=\cap_{n} Y_{n}$ and
$M=\{x\in X: \sup_{n}f_{n}(x)< \infty\}.$
Then $M\cap S$ is convex and the
upper limit of the family $\left(f_{n}\right)_{n\geq 1},$
the function $f=\sup_{n} f_{n},$ is
 $W-$convex on it.
\item If $ f:X\rightarrow \mathbb{R}$ is
a nontrivial strictly
 $W-$convex function then $f$ has
at most one global minimizer on $X$.
\end{enumerate}
\end{prop}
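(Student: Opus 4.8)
The plan is to treat items (1)--(6) as essentially one-line consequences of Definition \ref{defnconv}, each obtained by pushing the single defining inequality through an elementary operation, and to spend slightly more care on (7) and (8). For (1), if $x,y\in C$ and $t\in I$ then $W(x,y;t)\in C$ by convexity of $C$, so $g(W(x,y;t))=f(W(x,y;t))\le(1-t)f(x)+tf(y)=(1-t)g(x)+tg(y)$. For (2), multiplying the $W$-convexity inequality of $f$ by $\alpha\ge 0$ preserves its direction precisely because $\alpha$ is nonnegative. For (3), add the inequalities for $f_{1},\dots,f_{m}$. Item (4) is (2) followed by (3): a conical combination $\sum_{i}\alpha_{i}f_{i}$ with $\alpha_{i}\ge 0$ is a finite sum of the $W$-convex functions $\alpha_{i}f_{i}$. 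For (5), with $f=\max_{1\le i\le m}f_{i}$ one has, for each $i$, $f_{i}(W(x,y;t))\le(1-t)f_{i}(x)+tf_{i}(y)\le(1-t)f(x)+tf(y)$, and taking the maximum over $i$ on the left keeps the bound. For (6), write the $W$-convexity inequality for each $f_{n}$ in a pointwise-convergent sequence and let $n\to\infty$, since weak inequalities survive passage to the limit.

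For (7), I would first record that $S=\bigcap_{n}Y_{n}$ is convex: if $x,y\in S$ then $x,y\in Y_{n}$ for every $n$, hence $W(x,y;t)\in Y_{n}$ for every $n$, so $W(x,y;t)\in S$ --- this is just the intersection-stability of Takahashi convexity. To see that $M\cap S$ is convex, take $x,y\in M\cap S$ and $t\in I$; then $W(x,y;t)\in S$ by the previous step, while for every $n$, using that $f_{n}$ is $W$-convex on $Y_{n}\supseteq\{x,y\}$,
\[
f_{n}\bigl(W(x,y;t)\bigr)\le(1-t)f_{n}(x)+tf_{n}(y)\le(1-t)\sup_{k}f_{k}(x)+t\sup_{k}f_{k}(y)<\infty,
\]
where finiteness uses $x,y\in M$; hence $\sup_{n}f_{n}(W(x,y;t))<\infty$, i.e. $W(x,y;t)\in M$. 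Finally, taking the supremum over $n$ in the first inequality of that display gives $f(W(x,y;t))\le(1-t)f(x)+tf(y)$ with $f=\sup_{n}f_{n}$, which is the asserted $W$-convexity of $f$ on $M\cap S$.

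For (8), I would argue by contradiction. Suppose $f$ had two distinct global minimizers $x_{1}\ne x_{2}$, so $f(x_{1})=f(x_{2})=m:=\inf_{X}f$. Pick any $t\in I^{\tiny o}$ and put $z=W(x_{1},x_{2};t)$; strict $W$-convexity then yields $f(z)<(1-t)f(x_{1})+tf(x_{2})=m$, contradicting minimality of $m$. Hence $f$ has at most one global minimizer. The only step carrying real content is (7), where one must be careful to keep the constructed point $W(x,y;t)$ inside the common domain $M\cap S$ before invoking $W$-convexity of the individual $f_{n}$; all the rest is bookkeeping around the single inequality of Definition \ref{defnconv}.
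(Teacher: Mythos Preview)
Your proof is correct and follows essentially the same approach as the paper's own proof: each of items (1)--(6) is dispatched by pushing the defining inequality through the relevant elementary operation, item (7) is handled by first placing $W(x,y;t)$ in $S$ via intersection-stability of convexity and then in $M$ via the $W$-convexity inequalities, and item (8) is the same contradiction argument (the paper happens to pick $t=\tfrac12$ rather than an arbitrary $t\in I^{\circ}$, which is immaterial). The only cosmetic remark is that in (7) the inequality $f(W(x,y;t))\le(1-t)f(x)+tf(y)$ follows most directly by taking the supremum over $n$ in the full chain of your display (i.e.\ using the second inequality as well), rather than the first inequality alone.
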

\begin{proof}
\leavevmode
\begin{enumerate}
  \item
By the convexity of $C$, the restriction of $f$ to $C$ makes sense and the $W-$convexity of $g$ on $C$ follows from the $W-$convexity of $f$ on $X.$
  \item True since
$\;\;
\alpha f(W(x,y; t ))\leq
\alpha\left( (1- t )  f(x)+ t  f(y)\right)=
(1- t )  \alpha f(x)+ t  \alpha f(y).$
  \item  Obvious from Definition \ref{defnconv} and the linearity of the summation operator.
\item Follows from \ref{2} together with \ref{3}.
\item It suffices to show that $f=\max\{f_{1},f_{2}\}$ is $W-$convex on $X$ given the $W-$convexity of both $f_{1}$ and $f_{2}.$ For all $x,y \in X$ and $t \in I\,$ we have
\begin{align*}
\;f_{i}(\emph{W}(x,y;t))\leq (1-t)\,f_{i}(x)+t\,f_{i}(y)
\leq (1-t)\,f(x)+t\,f(y)
\end{align*}
 which yields  $\;f(\emph{W}(x,y;t))
\leq (1-t)\,f(x)+t\,f(y).$
\item A consequence of the monotonicity of the limit.
\item Let $x,y \in M \cap S.$
Then $x,y \in Y_{n}$ for all $n\geq1,$ $\sup_{n}f_{n}(x)< \infty$ and
 $\sup_{n}f_{n}(y)< \infty.$
Fix $t\in I$ and $n\geq1.$ By the convexity of $Y_{n}$ we know that it contains $\emph{W}(x,y;t).$
Hence $\emph{W}(x,y;t) \in S.$
To prove the convexity of $ M \cap S $
it remains to show that $\emph{W}(x,y;t) \in M.$
This follows from the $W-$convexity of $f_{n}$ as
$f_{n}\left(\emph{W}(x,y;t)\right) \leq
(1-t)\,\sup_{n} f_{n}(x)+t \,\sup_{n} f_{n}(y)<\infty.$
Finally, invoking the completeness axiom for the reals, the latter inequality implies
$\sup_{n} f_{n}\left(\emph{W}(x,y;t)\right) \leq
(1-t)\,\sup_{n}f_{n}(x)+t\, \sup_{n}f_{n}(y)<\infty,$
which shows that $\sup_{n} f_{n}$ is
$W-$convex on $M\cap S.$
\item Assume there are two
distinct points $x,y\in X$  such that
$ f(x)=f(y)=\inf_{x\in X}{f(x)}$. By convexity of
$X$ we have $W(x,y;\frac{1}{2})\in X$. Since $f$
is strictly $W-$convex then
$\,f\left(W(x,y;\frac{1}{2})\right)<
\frac{1}{2} f(x)+\frac{1}{2} f(y)=
\inf_{x\in X}{f(x)} $ which is a contradiction.
\end{enumerate}
\end{proof}
\section{$W-$convexity and continuity}
We begin with proving
 Lipschitz continuity
of $W-$convex functions on generalized
segments in convex metric spaces.
\begin{prop}\label{fpr}
Let $\left(X,W,d\right)$ be a convex metric space.
Let $x$ and $y$ be two distinct
points in $X$.
Then a $W-$convex function $f$
on the set $\,\mathcal{L}(x,y)=\{W(x,y;\lambda):
 0\leq \lambda \leq 1\}\,$
is Lipschitz continuous on it with a Lipschitz
constant that depends only on $x$ and $y$. Moreover,
if $\,|f(x)-f(y)|\,\leq\,\alpha\,d(x,y)\,$ for some
$\alpha>0$ then $\,|f(z)-f(w)|\,\leq\,\alpha\,d(z,w)\,$
for all $z,w\in \mathcal{L}(x,y)$.
\end{prop}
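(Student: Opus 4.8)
The plan is to carry over the classical fact that a convex function is Lipschitz on compact subintervals of the interior of its domain, reading everything through Lemma \ref{113}, which identifies the parameter of a point of $\mathcal{L}(x,y)$ with its rescaled distance to the two endpoints. Write $D=d(x,y)>0$. First I would fix $z=W(x,y;\lambda)$ and $w=W(x,y;\mu)$ in $\mathcal{L}(x,y)$ and, relabelling if necessary, assume $0\le\lambda\le\mu\le1$. Lemma \ref{113} gives $d(x,z)=\lambda D$, $d(z,y)=(1-\lambda)D$, $d(x,w)=\mu D$, $d(w,y)=(1-\mu)D$, and the reverse triangle inequality then yields $d(z,w)\ge(\mu-\lambda)D$. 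So it is enough to bound $|f(z)-f(w)|$ by $\alpha(\mu-\lambda)D$, and the problem reduces to controlling $f$ along $\mathcal{L}(x,y)$ by the two values $f(x)$ and $f(y)$.

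For that, the $W$-convexity inequality of Definition \ref{defnconv} applied to the endpoints already gives the upper bounds $f(z)\le(1-\lambda)f(x)+\lambda f(y)$ and $f(w)\le(1-\mu)f(x)+\mu f(y)$. To get matching lower bounds I would write the earlier point $z$ as a $W$-combination of $x$ and the later point $w$, and $w$ as a $W$-combination of $z$ and $y$, and apply $W$-convexity once more; this is the analogue of the monotonicity of the difference quotients $\bigl(f(W(x,y;\nu))-f(x)\bigr)/\nu$ for an ordinary convex function on $[0,1]$. Assembling the four resulting inequalities should give $|f(z)-f(w)|\le\alpha_{0}(\mu-\lambda)D$ with $\alpha_{0}$ built only from $f(x)$, $f(y)$ and $D$, which is the first assertion; and if in addition $|f(x)-f(y)|\le\alpha\,d(x,y)$, carrying $\alpha$ through the same chain of estimates should give $|f(z)-f(w)|\le\alpha(\mu-\lambda)D\le\alpha\,d(z,w)$, which is the refinement stated in the second half of the proposition.

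I expect the real obstacle to be the step that is automatic in a normed space but not in an arbitrary convex metric space: representing an interior point of $\mathcal{L}(x,y)$ as a $W$-combination of $x$ (or of $y$) and a second point of $\mathcal{L}(x,y)$. A general convex structure need not satisfy $W\bigl(x,W(x,y;\mu);s\bigr)=W(x,y;s\mu)$, nor need $W(z,y;s)$ return to $\mathcal{L}(x,y)$ at all, so those reverse estimates cannot simply be read off Definition \ref{defnconv}. To stay inside what Definition \ref{def1} guarantees I would instead squeeze $f(z)$ and $f(w)$ between affine functions of $f(x)$ and $f(y)$ using only the distance identities of Lemma \ref{113} together with the defining inequality (\ref{concon}) evaluated with $u$ taken among $x$, $y$, $z$, $w$, which should still produce a Lipschitz constant of the required form.
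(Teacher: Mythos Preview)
Your plan diverges sharply from the paper's argument. The paper does none of the lower-bound work you describe: it simply writes the two upper estimates
\[
f(z)\le\Bigl(1-\frac{d(x,z)}{d(x,y)}\Bigr)f(x)+\frac{d(x,z)}{d(x,y)}\,f(y),\qquad
f(w)\le\Bigl(1-\frac{d(x,w)}{d(x,y)}\Bigr)f(x)+\frac{d(x,w)}{d(x,y)}\,f(y),
\]
``subtracts'' them to get $f(z)-f(w)\le d(x,y)^{-1}\bigl(d(x,w)-d(x,z)\bigr)\bigl(f(x)-f(y)\bigr)$, bounds $|d(x,w)-d(x,z)|$ by $d(z,w)$, and then interchanges $z$ and $w$. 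That is the whole proof; no nested $W$-representations, no lower bounds on $f(z)$ or $f(w)$.

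Your instinct in the third paragraph is exactly on target: from $a\le A$ and $b\le B$ one cannot conclude $a-b\le A-B$, so the subtraction step in the paper is not justified by those two displayed inequalities alone. The representations $z=W\bigl(x,w;\cdot\bigr)$ and $w=W\bigl(z,y;\cdot\bigr)$ you reach for are precisely what would be needed to supply the missing lower estimates, and your observation that Definition~\ref{def1} does not force such identities (no associativity $W\bigl(x,W(x,y;\mu);s\bigr)=W(x,y;s\mu)$, no guarantee that $W(z,y;s)$ lands back in $\mathcal{L}(x,y)$) is correct. So you have not overcomplicated the problem; you have located the genuine soft spot that the paper's argument glosses over. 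Your own closing workaround, however, is left at the level of ``should still produce'': evaluating (\ref{concon}) with $u\in\{x,y,z,w\}$ gives only further \emph{upper} bounds on distances and on $f$, not the lower control on $f(w)$ (resp.\ $f(z)$) that the difference $f(z)-f(w)$ requires. Without an additional betweenness or associativity hypothesis on $W$, neither the paper's short argument nor your outline actually closes.
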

Before proceeding with
the proof of Proposition \ref{fpr},
we would like to make some remarks
on the set $\mathcal{L}(x,y)$.
\begin{rem}
If $X$ is a linear space and $W$ is defined by
(\ref{cs}) then $W(x,y;\lambda)$
is a unique vector in $X$ for each
 $\lambda\in I$ and the set $\mathcal{L}(x,y)$
 is known (\cite{cb1}) as the line segment joining
 the two vectors $x$ and
$y$. Clearly, the Euclidean geometry
justifies this notion. In metric spaces
the situation is different as,
for $\lambda \in I^{\tiny o},$
$W(x,y;\lambda)$ is not necessarily a unique
point. In fact the continuity of
$W$ in $\lambda$ required by Definition \ref{def1}  
is to be understood in the
sense of continuity of multivalued functions.
And if $\xi\in X$, the distance
$d\left(\xi, W(x,y;\lambda) \right)$
should be thought of as a point-set distance,
but this is just a technicality.
Nevertheless Lemma \ref{113} assures that
every point in the set $W(x,y;\lambda)$
belongs to $S\left(x,(1-\lambda)\,d(x,y)\right)
\cap S\left(y,\lambda\,d(x,y)\right)$
where $S(x_{0},r)$ is the usual sphere with center
$x_{0}$ and radius $r>0$.
Moreover, in the linear setting we have
the symmetry $W(x,y;\lambda)=W(y,x;1-\lambda)$
which leads to the symmetry
$\mathcal{L}(x,y)=\mathcal{L}(y,x)$.
While, from Definition \ref{def1} and
Lemma \ref{113} deduced from it, we have
\begin{align*}
\hspace{-0.5 cm}
d\big(W(x,y;\lambda),W(y,x;1-\lambda)\big)
\leq&\; (1-\lambda)\,
d\big(x,W(y,x;\lambda)\big)+
\lambda\,
d\big(y,W(y,x;\lambda)\big)\\
=&\;
\big((1-\lambda)^2+\lambda^2 \big)\,d(x,y).
\end{align*}
So, all that can be inferred
in the convex metric space
$\left(X,W,d\right)$
is
$\,d\big(W(x,y;\lambda),W(y,x;1-\lambda)\big)
< 2 d(x,y),$ $\lambda\in I^{\tiny o}.$
Consequently
$\mathcal{L}(x,y)$ is not to be assumed
symmetric in general. Finally, observe
that $\mathcal{L}(x,y)$ is closed.
Indeed, it follows from
Lemma \ref{113} that any $u\in \mathcal{L}(y,x)$
can be written as
$\,u=W(x,y,{d(x,u)}/{d(x,y)}).$
So if $\left(z_{n}\right)$ is a sequence of elements 
of $ \mathcal{L}(x,y)$ then
$\,z_{n}=W(x,y,{d(x,z_{n})}/{d(x,y)}),\,$
$n\geq 1$. If in addition
$z_{n}\rightarrow z$ as $n \rightarrow \infty$
then, by the continuity of $d$ and $W,$
we formally get $\,z=\lim_{n\rightarrow \infty}\,
 W(x,y,{d(x,z_{n})}/{d(x,y)})=
 W(x,y,{d(x,z)}/{d(x,y)}).\,$
Since $\,d(x,z_{n})\leq d(x,y)\,$ then,
passing to the limit, we also have
$\,d(x,z)\leq d(x,y).$
This shows that $z\in \mathcal{L}(x,y).$
\end{rem}
Now we prove Proposition \ref{fpr}.
\begin{proof}
Fix $x,y \in X$ so that
$d(x,y)>0$. Let $z,w\in \mathcal{L}(y,x) $
be such that $z\neq w$. Then, by $W-$convexity of $f$,
we have
\begin{eqnarray}
\label{z1} f\left(z\right)=
f\left( W\left(x,y,\frac{d(x,z)}{d(x,y)}\right)\right)
 &\leq& \left( 1-\frac{d(x,z)}{d(x,y)}\right) f(x)+
\frac{d(x,z)}{d(x,y)} f(y).
\end{eqnarray}
Similarly
\begin{eqnarray}
\label{w1} f\left(w\right)=
f\left( W\left(x,y,\frac{d(x,w)}{d(x,y)}\right)\right)
 &\leq& \left( 1-\frac{d(x,w)}{d(x,y)}\right) f(x)+
\frac{d(x,w)}{d(x,y)} f(y).
\end{eqnarray}
Considering (\ref{z1}) and (\ref{w1}),
we have only two possibilities. Either
\begin{align}
f\left(z\right)-f\left(w\right)
\nonumber
\leq&\; \left( d(x,y)  \right)^{-1}
\, \left(d(x,w)-d(x,z)\right)\,
 \left(  f(x)-f(y)\right)\\
\label{zw1}\leq&\;  \left( d(x,y)  \right)^{-1}
\,\left|  f(x)-f(y)\right|
\, d(z,w).
\end{align}
Or
\begin{align}
f\left(z\right)-f\left(w\right)
\nonumber
\leq&\; \left( d(x,y)  \right)^{-1}
\, \left(d(x,z)-d(x,w)\right)\,
 \left(  f(x)-f(y)\right)\\
\label{zw2}\leq&\;  \left( d(x,y)  \right)^{-1}
\,\left|  f(x)-f(y)\right|
\, d(z,w).
\end{align}
Interchanging $z$ and $w$ in
both sides of (\ref{zw1}) or (\ref{zw2})
we immediately get
\begin{align}\label{zwf}
|f\left(z\right)-f\left(w\right)|
\,\leq\, \left( d(x,y)  \right)^{-1}
\,\left|  f(x)-f(y)\right|
\, d(z,w)
\end{align}
which proves that $f$ is Lipschitz continuous
on $\,\mathcal{L}(y,x)\,$
with the Lipschitz constant \linebreak
$\left( d(x,y)  \right)^{-1}
\,\left|  f(x)-f(y)\right|$. The inequality
(\ref{zwf}) demonstrates
the second assertion of the proposition as well.
 \end{proof}
\begin{cor}
Let $\left(X,W,d\right)$ be a convex metric space.
If a $W-$convex function $f$
on the set $\,\mathcal{L}(x,y)=\{W(x,y;\lambda):
 0\leq \lambda \leq 1\}\,$
is such that $f(x)=f(y)$ then
$f$ is constant on $\mathcal{L}(x,y)$.
\end{cor}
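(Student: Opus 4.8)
The plan is to derive this immediately from the second assertion of Proposition \ref{fpr}. First I would dispose of the degenerate case: if $x=y$, then by Lemma \ref{113} every point of $\mathcal{L}(x,y)$ is at distance $0$ from $x$, so $\mathcal{L}(x,y)=\{x\}$ and there is nothing to prove. Hence I may assume $d(x,y)>0$, which is precisely the setting in which Proposition \ref{fpr} is stated.

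The crucial observation is that the hypothesis $f(x)=f(y)$ says $|f(x)-f(y)|=0\le \alpha\,d(x,y)$ for \emph{every} $\alpha>0$, not merely for some fixed constant. Feeding this into the ``moreover'' part of Proposition \ref{fpr} gives $|f(z)-f(w)|\le \alpha\,d(z,w)$ for all $z,w\in\mathcal{L}(x,y)$ and all $\alpha>0$. Fixing $z$ and $w$ and letting $\alpha\to 0^{+}$ forces $f(z)=f(w)$; taking $w=x$ then shows $f\equiv f(x)$ on $\mathcal{L}(x,y)$, which is the claim.

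I do not anticipate any genuine obstacle. The only point worth flagging is why one should bother with Proposition \ref{fpr} at all: $W$-convexity alone gives the one-sided bound $f(z)=f\bigl(W(x,y;d(x,z)/d(x,y))\bigr)\le (1-\lambda)f(x)+\lambda f(y)=f(x)$, but reversing this inequality directly would require writing $x$ (or $y$) as a $W$-combination of $z$ and the opposite endpoint — a betweenness/prolongation property that a general convex structure need not enjoy, as the remark preceding the proof of Proposition \ref{fpr} already hints. Routing through the Lipschitz estimate sidesteps this entirely and reduces the corollary to a one-line limiting argument.
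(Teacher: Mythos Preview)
Your argument is correct and follows the paper's intended route: the corollary is stated without proof precisely because it drops out of the explicit Lipschitz constant in Proposition~\ref{fpr}. The only remark is that the limiting argument is unnecessary --- the first assertion of Proposition~\ref{fpr} (inequality~(\ref{zwf})) already gives the Lipschitz constant as $\bigl(d(x,y)\bigr)^{-1}|f(x)-f(y)|$, which vanishes outright when $f(x)=f(y)$, so $|f(z)-f(w)|\le 0$ directly without passing through $\alpha\to 0^{+}$.
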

Continuous functions on convex metric spaces are
$W-$convex provided that they are
midpoint $W-$convex in a certain sense.
 We prove this in the following Proposition.
\begin{prop}
Let $\left(X,W,d\right)$ be a convex metric space.
Every continuous function $f:X\rightarrow \mathbb{R}$ such that
$\,\displaystyle \,f\left( W \left(
x,y;\frac{\mu+\nu}{2}\right)\right)\leq
\frac{1}{2}\,f\left( W \left(
x,y;\mu\right)\right)+
\frac{1}{2}\,f\left( W \left(
x,y;\nu\right)\right),\,$
$x,y\in X,\,$ $\mu,\nu \in I,\,$
is $W-$convex.
\begin{align*}
\begin{tikzpicture}[domain=0:1,xscale=8,yscale=2]
%\draw[red] plot (\x, {0.25+\x/2+\x*\x/2}) node[right] {$v_1(x)$};
\draw plot (\x, {0.025+\x*\x}) node[right] {$y$};
\node [left] at (0,0) {$x$};
\node [below] at (0.13,0.03) {$W(x,y;\mu)$};
\node [above] at (0.33,0.2) {$W(x,y;\frac{\mu+\nu}{2})$};
\node [below right] at (0.56,0.3386) {$W(x,y;\nu)$};
\node at (0.1,0.035) {\textbf{\tiny $\times$}};
\node at (0.33,0.1339) {\textbf{\tiny$\times$}};
\node at (0.56,0.3386) {\textbf{\tiny$\times$}};
\node at (0.215,0.071225) {\textbf{\tiny$\parallel$}};
\node at (0.445,0.223025) {\textbf{\tiny$\parallel$}};
\node at (1.2,0.0) {$0\leq \nu \leq \mu  \leq 1$};
\end{tikzpicture}
\end{align*}
\end{prop}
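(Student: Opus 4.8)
The plan is to fix the two endpoints, pass to an auxiliary function of one real variable, and then invoke the classical fact that a continuous midpoint-convex function on an interval is convex. So fix $x,y\in X$ and define $\varphi\colon[0,1]\to\mathbb{R}$ by $\varphi(\lambda)=f\big(W(x,y;\lambda)\big)$, where $W$ is the given convex structure. First I would check that $\varphi$ is continuous: the map $\lambda\mapsto W(x,y;\lambda)$ is continuous because $W$ is continuous on $X\times X\times I$ (it is the composition of $\lambda\mapsto(x,y,\lambda)$ with $W$), and $f$ is continuous by hypothesis, so $\varphi$ is a composition of continuous maps. Next, the assumed inequality, read for these particular $x,y$, states precisely that $\varphi\big(\tfrac{\mu+\nu}{2}\big)\le\tfrac12\varphi(\mu)+\tfrac12\varphi(\nu)$ for all $\mu,\nu\in[0,1]$; that is, $\varphi$ is midpoint convex on $[0,1]$.

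Then I would apply the classical theorem of Jensen: a midpoint-convex function that is continuous on an interval is convex there. For completeness one argues in two steps — first, by induction on $n$, midpoint convexity yields $\varphi\big((1-\lambda)s+\lambda t\big)\le(1-\lambda)\varphi(s)+\lambda\varphi(t)$ for every dyadic rational $\lambda=k/2^n\in[0,1]$ (the inductive step writes an odd numerator $k$ as the average of $k-1$ and $k+1$, whose numerators are divisible by $2$); second, continuity of $\varphi$ together with the density of the dyadic rationals in $[0,1]$ extends the inequality to all $\lambda\in[0,1]$. Hence $\varphi$ is convex, and in particular $\varphi(t)=\varphi\big((1-t)\cdot 0+t\cdot 1\big)\le(1-t)\varphi(0)+t\,\varphi(1)$ for every $t\in[0,1]$.

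It remains only to identify the endpoint values. By Lemma \ref{113}, $d\big(x,W(x,y;0)\big)=0\cdot d(x,y)=0$ and $d\big(y,W(x,y;1)\big)=(1-1)\,d(x,y)=0$, so $W(x,y;0)=x$ and $W(x,y;1)=y$; therefore $\varphi(0)=f(x)$ and $\varphi(1)=f(y)$. Substituting gives $f\big(W(x,y;t)\big)\le(1-t)f(x)+t\,f(y)$ for all $t\in I$, and since $x,y\in X$ were arbitrary, $f$ is $W$-convex. The only genuine content here is the classical one-variable lemma; everything else is bookkeeping, and I do not anticipate a real obstacle. The one point to keep an eye on is that the hypothesis is assumed for all $\mu,\nu\in I$, which is exactly what is needed to obtain midpoint convexity of $\varphi$ on the whole interval $[0,1]$ — if it were assumed only near the endpoints one could not run the dyadic induction.
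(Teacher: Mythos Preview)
Your proof is correct and is essentially the same argument as the paper's, just repackaged: the paper also runs the dyadic induction (with $\Lambda_n=\{m/2^n\}$, writing $\lambda\in\Lambda_{k+1}$ as the average of two points of $\Lambda_k$ and invoking the hypothesis at those $\mu,\nu$) and then passes to arbitrary $t\in I$ by density and continuity of $W$ and $f$, using Lemma~\ref{113} for the endpoint identifications. The only cosmetic difference is that you name the auxiliary function $\varphi(\lambda)=f(W(x,y;\lambda))$ and quote the classical Jensen lemma, whereas the paper writes out the same computation directly in terms of $f(W(x,y;\lambda))$.
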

\begin{proof}
Let $n$ be a nonnegative integer and let $\Lambda_{n}=\big\{{m}/{2^n}, m=0,1,...,2^{n}\big\}.$
By induction on $n$, we show that
\begin{align}\label{conlmd}
f\left( W(x,y;\lambda)\right)
\leq (1-\lambda)\,f(x)+\lambda\,f(y),
\quad \text{for every}\;x,y\in X,\;\lambda\in \Lambda_{n}.
\end{align}
Since, by lemma \ref{113},
$ x=W\left(x,y;0\right) $ and
$ y=W\left(x,y;1\right)$ then
(\ref{conlmd}) is valid when
$n=0$ as $\Lambda_{0}=\{0,1\}$.
Assume that (\ref{conlmd}) is
satisfied for any $\lambda \in \Lambda_{k}$
for some natural number $k$.
Now let $x,y\in X$ and suppose that $\lambda \in \Lambda_{k+1}$.
Obviously, there exist $s,t \in \Lambda_{k}$
such that $\lambda=(s+t)/2$. The induction hypothesis implies that
\begin{align}\label{indhypu1}
f\left( W(x,y;u)\right)
\leq (1-u)\,f(x)+u\,f(y),\quad u\in\{s,t\}.
\end{align}
By our assumption on $f$ we have
\begin{align}\label{indhypu2}
f\left( W(x,y;\lambda)\right)
\leq \frac{1}{2} f\left( W(x,y;s)\right)+
\frac{1}{2} f\left( W(x,y;t)\right).
\end{align}
Using (\ref{indhypu1}) in (\ref{indhypu2}) we obtain
\begin{align*}
f\left( W(x,y;\lambda)\right)
&\leq\, \frac{1}{2} \sum_{u\in\{s,t\}}\, \big( (1-u)\,f(x)+u\,f(y) \big) \\
&=\,\left(1-\frac{s+t}{2}\right)
\,f(x)+\frac{s+t}{2}\,f(y)
=\left(1-\lambda\right)
\,f(x)+\lambda\,f(y).
\end{align*}
This proves (\ref{conlmd}). Let $r\in I$ be arbitrary. Since the set $\Lambda=\cup_{n\geq 0} \Lambda_{n}$ is dense in $I$  then there exists a sequence $\left(r_{n}\right)\subset \Lambda$ that converges to $r$. Thus
\begin{align}\label{dnscnt}
f\left( W(x,y;r)\right) =
f\left( W(x,y;\lim_{n\rightarrow \infty} r_{n})\right)  = \lim_{n\rightarrow \infty}
f\left( W(x,y; r_{n})\right)
\end{align}
by the continuity of both the convex structure $W$ and the function $f$. Since $r_{n}\in \Lambda$ then
there exists an integer $m\geq 0$ such that
$r_{n}\in \Lambda_{m}$. By (\ref{conlmd}), $\,f\left( W(x,y; r_{n})\right)
\leq (1-r_{n})\,f(x)+r_{n}\,f(y)$. From the latter inequality, the monotonicity of the limit and (\ref{dnscnt}) we obtain
\begin{align*}
f\left( W(x,y;r)\right)\leq (1-\lim_{n\rightarrow \infty} r_{n})\,f(x)+ \lim_{n\rightarrow \infty} r_{n}\,f(y) = (1-r)\,f(x)+ r\,f(y).
\end{align*}
\end{proof}
The next lemma paves the way to Proposition
 \ref{boundcon} where we show that
boundedness of $\,W-$convex functions on certain convex metric spaces is a necessary and sufficient condition for their continuity. In fact, our discussion in the rest of this section is confined to convex metric spaces
$\left(X,W,d\right)$ that enjoy the property that
for every two distinct points $x,y\in X$
and every $\,\lambda \in\, ]0,1[\, $
there exists $\xi\in X$ such that
$x=W(y,\xi;\lambda)$ or there exists
$\eta\in X$  such that $ \,y=W(x,\eta;\lambda)$.
This property is naturally satisfied if
$X$ is a linear space with $W$ defined as in (\ref{cs}).
In that case,
$ \xi=\lambda^{-1}\left(x-y\right)+y$
and $\eta=\lambda^{-1}\left(y-x\right)+x$.
\begin{lem}\label{lemxst}
Let $B(x_{0},r)$ be an open ball centered at $x_{0}$ with radius $r>0$ that is contained in $X$.
If $f:X \rightarrow \mathbb{R}$ is $W-$ convex
such that $|f(x)|\leq M$ on $B(x_{0},r)$ then $f$ is $\displaystyle \frac{2M}{\rho}-$Lipschitz
on $B(x_{0},r-\rho),$ $\,0<\rho<r$.
\begin{proof}
Let $x$ and $y$  be two distinct points in
$B(x_{0},r).$ Then, by our assumption on
$\left(X,W,d\right)$, there exists $\xi\in X$ such that $\displaystyle x=W\left(y,\xi;\frac{d(x,y)}{\rho+d(x,y)}\right)$ or there exists $\eta\in X$  such that $ \,\displaystyle y=W\left( x,\eta;\frac{\rho}{\rho+d(x,y)}\right)$.
We shall deal with the first case and the second one can be treated analogously. First, since $f$
is $W-$convex then
\begin{align*}
f(x)\leq \frac{\rho}{\rho+d(x,y)} f(y)+
\frac{d(x,y)}{\rho+d(x,y)} f(\xi).
\end{align*}
This implies
\begin{align}\label{subsin}
f(x)-f(y)\leq \frac{f(\xi)-f(x)}{\rho} d(x,y).
\end{align}
Assume for the moment that $\xi\in B(x_{0},r)$.
Using the boundedness of $f$
on $B(x_{0},r)$ the inequality (\ref{subsin})
takes the form
\begin{align}\label{subsin1}
f(x)-f(y)\leq \frac{2M}{\rho} d(x,y).
\end{align}
Interchanging the roles of $x$ and $y$
then exploiting the symmetry of the metric, we deduce from (\ref{subsin1}) that
\begin{align*}
|f(x)-f(y)|\leq \frac{2M}{\rho} d(x,y).
\end{align*}
To complete the proof, it remains to
show that $\xi\in B(x_{0},r)$.
From Lemma \ref{113}, we have
\begin{align*}
d(\xi,x)=
d(\xi,W\left(y,\xi;\frac{d(x,y)}{\rho+d(x,y)}\right))
=\frac{\rho d(\xi,y)}{\rho+d(x,y)}
\leq \frac{\rho d(\xi,x)}{\rho+d(x,y)}
+\frac{\rho d(x,y)}{\rho+d(x,y)}.
\end{align*}
Solving this inequality for
$d(\xi,x)$ we find
$d(\xi,x)\leq \rho$. Finally
\begin{align*}
d(\xi,x_{0})\leq
d(\xi,x)+d(x,x_{0})<\rho+r-\rho=r.
\end{align*}
\end{proof}
\end{lem}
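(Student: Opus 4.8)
The plan is to estimate $|f(x)-f(y)|$ for two arbitrary distinct points $x,y\in B(x_{0},r-\rho)$ and to show it does not exceed $\tfrac{2M}{\rho}\,d(x,y)$. The guiding idea is the classical one from convex analysis: exhibit one of the two points, say $x$, as an \emph{interior} point of a generalized segment whose endpoints are $y$ and a third point $\xi$ that lies just outside $B(x_{0},r-\rho)$ but still inside $B(x_{0},r)$. Then $W$-convexity gives $f(x)\leq(1-\lambda)f(y)+\lambda f(\xi)$ for the corresponding ratio $\lambda$, and since $|f|\leq M$ on $B(x_{0},r)$ the ``overshoot'' value $f(\xi)$ is under control, so the convexity inequality turns into a Lipschitz-type estimate. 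The standing assumption on $(X,W,d)$ is exactly what supplies such an extension $\xi$ past $x$ (or the symmetric point $\eta$ past $y$).

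Concretely, I would set aside the trivial case $x=y$ and put $\lambda=\dfrac{d(x,y)}{\rho+d(x,y)}\in\,]0,1[$, so that $1-\lambda=\dfrac{\rho}{\rho+d(x,y)}$. Applying the standing property to the distinct points $x,y$ with this $\lambda$, there is either $\xi\in X$ with $x=W(y,\xi;\lambda)$ or $\eta\in X$ with $y=W(x,\eta;\lambda)$; by symmetry of the metric the two alternatives are treated in the same way, so I take the first. From $W$-convexity, $f(x)=f\bigl(W(y,\xi;\lambda)\bigr)\leq(1-\lambda)f(y)+\lambda f(\xi)$; multiplying through by $\rho+d(x,y)>0$ and rearranging gives $f(x)-f(y)\leq\dfrac{d(x,y)}{\rho}\bigl(f(\xi)-f(x)\bigr)$.

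The crux of the argument is to verify that $\xi\in B(x_{0},r)$, since only then does the bound $|f|\leq M$ apply at $\xi$. This is where Lemma \ref{113} is used: applied to $W(y,\xi;\lambda)=x$ it yields $d(\xi,x)=(1-\lambda)\,d(\xi,y)$, and combining this with the triangle inequality $d(\xi,y)\leq d(\xi,x)+d(x,y)$ and the value of $1-\lambda$ gives $d(\xi,x)\leq\dfrac{\rho}{\rho+d(x,y)}\bigl(d(\xi,x)+d(x,y)\bigr)$, which solves to $d(\xi,x)\leq\rho$. Since $x\in B(x_{0},r-\rho)$, the triangle inequality then gives $d(\xi,x_{0})\leq d(\xi,x)+d(x,x_{0})<\rho+(r-\rho)=r$, so $\xi\in B(x_{0},r)$; as $x$ lies in $B(x_{0},r)$ too, both $|f(x)|$ and $|f(\xi)|$ are at most $M$, hence $f(\xi)-f(x)\leq 2M$ and the previous display becomes $f(x)-f(y)\leq\dfrac{2M}{\rho}\,d(x,y)$.

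To finish, interchanging $x$ and $y$ — which, in the complementary alternative of the standing property, is precisely the ``$\eta$ past $y$'' case — produces the reverse inequality $f(y)-f(x)\leq\dfrac{2M}{\rho}\,d(x,y)$, and the two together give $|f(x)-f(y)|\leq\dfrac{2M}{\rho}\,d(x,y)$ on $B(x_{0},r-\rho)$, i.e.\ $f$ is $\tfrac{2M}{\rho}$-Lipschitz there. I expect the main obstacle to be the geometric bookkeeping in the third step: one must choose the ratio $\lambda$ so that the overshoot $d(\xi,x)$ is forced down to (at most) $\rho$ rather than left uncontrolled, and $\lambda=\dfrac{d(x,y)}{\rho+d(x,y)}$ is exactly the value that achieves this; a secondary subtlety is that the extendability hypothesis is disjunctive, so one has to check that whichever of the two alternatives occurs still yields the needed one-sided estimate.
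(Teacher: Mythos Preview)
Your proposal is correct and matches the paper's proof essentially step for step: the same ratio $\lambda=d(x,y)/(\rho+d(x,y))$, the same convexity estimate $f(x)-f(y)\le\frac{d(x,y)}{\rho}\bigl(f(\xi)-f(x)\bigr)$, the same use of Lemma~\ref{113} together with the triangle inequality to force $d(\xi,x)\le\rho$ and hence $\xi\in B(x_{0},r)$, and the same final interchange of $x$ and $y$. Your write-up is, if anything, slightly tidier---you correctly begin with $x,y\in B(x_{0},r-\rho)$ (the paper writes $B(x_{0},r)$ before tacitly using $d(x,x_{0})<r-\rho$) and you keep the same $\lambda$ in both disjuncts of the standing hypothesis, which matches its statement exactly.
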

\begin{prop}\label{boundcon}
A $W-$convex function $f$ on $X$ is locally bounded if and only if it is locally Lipschitz.
\end{prop}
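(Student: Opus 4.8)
The plan is to read off the nontrivial direction from Lemma \ref{lemxst} and to settle the converse by the elementary fact that a Lipschitz function on a ball is bounded on that ball. I understand \emph{locally bounded} to mean that every point of $X$ admits a neighbourhood on which $|f|$ is bounded, and \emph{locally Lipschitz} to mean that every point admits a neighbourhood on which $f$ obeys a Lipschitz estimate. Throughout, the standing hypothesis on $(X,W,d)$ imposed just before Lemma \ref{lemxst} remains in force, so that lemma is at our disposal; I also use that in a metric space every neighbourhood of a point contains an open ball centred at it, and that any such ball is automatically a subset of $X$ since $f$ is defined on all of $X$.

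For the implication ``locally bounded $\Rightarrow$ locally Lipschitz'', fix $x_{0}\in X$. By hypothesis there are $r>0$ and $M\geq 0$ with $|f|\leq M$ on the open ball $B(x_{0},r)$. Applying Lemma \ref{lemxst} with the choice $\rho=r/2$ (which is strictly between $0$ and $r$), the function $f$ is $\tfrac{4M}{r}$-Lipschitz on $B(x_{0},r/2)$, and the latter is a neighbourhood of $x_{0}$. Since $x_{0}$ was arbitrary, $f$ is locally Lipschitz. (If one prefers to read ``locally bounded'' as ``locally bounded above'', one first upgrades $f\leq M$ on $B(x_{0},r)$ to a two-sided bound on a smaller ball by combining $W$-convexity with the structural property of $X$ to produce, for each point, a ``reflected'' point still lying in $B(x_{0},r)$; then the same argument applies.)

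For the converse, fix $x_{0}\in X$ and choose $r>0$ and $L\geq 0$ such that $|f(u)-f(v)|\leq L\,d(u,v)$ for all $u,v\in B(x_{0},r)$. Then, for every $x\in B(x_{0},r)$,
\[
|f(x)|\;\leq\;|f(x_{0})|+|f(x)-f(x_{0})|\;\leq\;|f(x_{0})|+L\,d(x,x_{0})\;<\;|f(x_{0})|+Lr ,
\]
so $f$ is bounded on the neighbourhood $B(x_{0},r)$ of $x_{0}$, hence locally bounded. I do not anticipate a genuine obstacle here: the substance of the proposition is entirely contained in Lemma \ref{lemxst}, whose proof already exploited the structural property of $(X,W,d)$; the only steps that merit a line of care are reducing an arbitrary neighbourhood to a ball contained in $X$ and the bookkeeping of radii (picking $\rho\in\,]0,r[$, say $\rho=r/2$) so that Lemma \ref{lemxst} delivers a Lipschitz estimate on an honest neighbourhood of $x_{0}$.
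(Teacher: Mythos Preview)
Your proposal is correct and follows essentially the same approach as the paper: apply Lemma \ref{lemxst} with $\rho=r/2$ for the nontrivial direction, and observe that locally Lipschitz implies locally bounded for the converse. The only cosmetic difference is that the paper dispatches the converse in one line via ``locally Lipschitz $\Rightarrow$ continuous $\Rightarrow$ locally bounded'', whereas you write out the explicit bound $|f(x)|<|f(x_{0})|+Lr$; your parenthetical remark on upgrading a one-sided bound is exactly the content of the paper's Remark following the proposition.
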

\begin{proof}
Of course a locally Lipschitz
function is continuous and therefore
locally bounded. Let $f$ be locally bounded and let
$x_{0}\in X$. Then there exists $r>0$ such that
$f$ is bounded on $B(x_{0},r)$ and,
by Lemma \ref{lemxst}, $f$ is Lipschitz
on $B(x_{0},r/2)$. Since $x_{0}$ was arbitrary
then $f$ is locally Lipschitz.
\end{proof}
\begin{rem}\label{bbove}
The local boundedness assumption on the $W-$convex function $f$ in Lemma \ref{lemxst},
and consequently in Proposition \ref{boundcon}, can be weakened to local boundedness from above.
To prove this, assume that
there exists $c>0$ such that $ f(\xi)\leq c $
 for every $\xi\in B(x_{0},r)$ and
let $x\in B(x_{0},r)$.
Then there exists $y\in X$ such that
$x_{0}=W(x,y,\frac{1}{2})$.
Since, by Lemma \ref{113}, $ d(y,x_{0})=d(x,x_{0})<r $ then $y\in B(x_{0},r)$.
Furthermore, by $W-$convexity of $f$,
$\,2f(x_{0})\leq f(x)+f(y)$. So
\begin{align*}
 2f(x_{0})-c \leq 2f(x_{0})-f(y) \leq f(x)\leq c\,\Longrightarrow\,
 |f(x)|\leq c+2|f(x_{0})|.
\end{align*}
\end{rem}
\begin{rem}
Recall that a function $f:X\rightarrow \mathbb{R}$
is lower semicontinuous at $x_{0}$ if for every
$t< f(x_{0})$ there is an open neighbourhood
$\mathcal{N}_{x_{0}}$ of $x_{0}$ such that
$f(x)>t$ for every $x\in \mathcal{N}_{x_{0}}$,
and if $\,\forall\:t> f(x_{0})\;\exists\;\mathcal{N}_{x_{0}}:\; f(x)<t,\;
\forall\:x\in \mathcal{N}_{x_{0}}$
then $f$ is upper semicontinuous at $x_{0}$.
It follows from Proposition \ref{boundcon}
is that upper semicontinuous $W-$convex
functions on open sets are continuous.
The same applies to lower semicontinuous
functions if and only if $X$ is complete.
Furthermore, a family of continuous
$W-$convex pointwise bounded functions
on an open convex subset of a complete metric space
is locally equi-bounded and locally equi-Lipschitz.
The most important consequence of these facts
is that pointwise convergence of sequences of
continuous $W-$convex functions on open convex
subsets of complete metric spaces is uniform
 on compact sets and preserves continuity.
Since the proofs of
these results  (cf. \cite{cb1,cb2,cb3} ) is indifferent to the topology
of the space and does not depend on linearity,
we find it redundant to give them here.
\end{rem}
\section{\large Epigraphs and sublevel sets of
$W-$convex functions}
The epigraph of a realvalued  function
$f$ on a set $C$ is the set $Epi (f)=\{(x,s)\in C\times \mathbb{R}: f(x)\leq s\}$  and the sublevel set of
$f$ of height $h$ is is the set
$ S_{h}(f)=\{x\in C: f(x)\leq h\}.$
In Proposition \ref{confunset} below
we show how $W-$convexity of functions
is related to the convexity of their
 epigraphs and sublevel sets.
First,
let $\left(X, W_{X}, d_{X}\right)$
and $\left(Y, W_{Y}, d_{Y}\right)$
be two convex metric spaces.
The mapping
$\,d_{p}: \left( X\times Y \right)^{2}
\rightarrow [0,\infty[$,
\begin{align*}
d_{p}\left((x_{1},y_{1}),(x_{2},y_{2})\right)=
\left\{
  \begin{array}{ll}
    \big( \left(d_{X}(x_{1},x_{2})\right)^{p}+
\left(d_{Y}(y_{1},y_{2})\right)^{p}\big)^{\frac{1}{p}}, & \hbox{$1\leq p<\infty$;} \\\\
\max\left\{d_{X}(x_{1},x_{2}),
d_{Y}(y_{1},y_{2})\right\}, & \hbox{ $p=\infty$,}
  \end{array}
\right.
\end{align*}
is a metric on the cartesian product
$X\times Y$ and $\left(X\times Y,d_{p} \right)$
is called a product metric space. Now,
let $\left(X,W_{X},d_{X}\right)$
and $\left(Y,W_{Y},d_{Y}\right)$ be two
convex metric spaces. We note the following:
\begin{lem}\label{lemconprod}
The mapping $\,W_{X\times Y}: \left(X\times Y\right)^{2}\times I\rightarrow X\times Y\,$ given by $\,W_{X\times Y}\left(
\left(x_{1},y_{1}\right),
\left(x_{2},y_{2}\right);t\right)=
\left( W_{X}(x_{1},x_{2};t),
W_{Y}(y_{1},y_{2};t)\right)$
is continuous and defines a convex structure on
the product metric space $\left(X\times Y,d_{1}\right).$
\end{lem}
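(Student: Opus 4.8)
The plan is to verify directly the two requirements that Definition \ref{def1} places on a convex structure, exploiting the componentwise definition of $W_{X\times Y}$ together with the fact that, under $d_1$, the distance between two points of $X\times Y$ is the \emph{sum} of the two coordinate distances.

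First I would dispose of continuity. Since $d_1\big((x_n,y_n),(x,y)\big)=d_X(x_n,x)+d_Y(y_n,y)$, a sequence in $X\times Y$ converges in $d_1$ if and only if both coordinate sequences converge, and likewise for the product of $\left(X\times Y\right)^2$ with $I$. Hence if $\big(\left(x_1^{(n)},y_1^{(n)}\right),\left(x_2^{(n)},y_2^{(n)}\right),t_n\big)$ converges to $\big((x_1,y_1),(x_2,y_2),t\big)$, then $x_i^{(n)}\to x_i$, $y_i^{(n)}\to y_i$ and $t_n\to t$, so by the continuity of $W_X$ and of $W_Y$ the points $W_X(x_1^{(n)},x_2^{(n)};t_n)$ and $W_Y(y_1^{(n)},y_2^{(n)};t_n)$ converge to $W_X(x_1,x_2;t)$ and $W_Y(y_1,y_2;t)$ respectively; that is exactly convergence of $W_{X\times Y}$ in $d_1$.

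Next I would check the inequality \eqref{concon}. Fix $(x_1,y_1),(x_2,y_2),(u,v)\in X\times Y$ and $t\in I$. By the definition of $d_1$ and of $W_{X\times Y}$,
\[
d_1\big((u,v),W_{X\times Y}((x_1,y_1),(x_2,y_2);t)\big)
= d_X\big(u,W_X(x_1,x_2;t)\big)+d_Y\big(v,W_Y(y_1,y_2;t)\big).
\]
Applying the convex-structure inequality for $W_X$ to the first summand and for $W_Y$ to the second, then adding and collecting the coefficients $(1-t)$ and $t$, yields precisely
\[
(1-t)\,d_1\big((u,v),(x_1,y_1)\big)+t\,d_1\big((u,v),(x_2,y_2)\big).
\]
Since $(u,v)\in X\times Y$ was arbitrary, this establishes that $W_{X\times Y}$ is a convex structure on $\left(X\times Y,d_1\right)$.

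I do not expect a genuine obstacle: the whole force of the argument is that the $\ell^1$-type product metric is additive over coordinates, so the two coordinate inequalities combine with no loss whatsoever. (If one tried the same for $d_p$ with $1<p<\infty$ one would additionally invoke Minkowski's inequality — the triangle inequality for the $\ell^p$-norm on $\mathbb{R}^2$ applied to the vectors $\big((1-t)a_1+ta_2,\,(1-t)b_1+tb_2\big)$ — and it still works, but $d_1$ keeps the computation transparent, which is presumably why it is the one singled out.) The only point worth a word of care is that, in view of the earlier remark, $W_X$ and $W_Y$ may be multivalued, in which case the displayed equalities are to be read as point-set distances and the same lines go through verbatim.
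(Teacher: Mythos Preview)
Your argument is correct and, for the actual content of the lemma (continuity and the inequality~\eqref{concon} for $d_1$), it is the same computation the paper carries out: split $d_1$ into the two coordinate distances, apply the convex-structure inequality in each factor, and add. Your treatment of continuity is more explicit than the paper's one-line appeal to the continuity of $W_X$ and $W_Y$, but the idea is identical.

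The one substantive divergence is your parenthetical remark about $d_p$ for $p>1$. The paper deliberately runs the estimate for general $p$ using the crude bound $(\mu+\nu)^p\le 2^{p-1}(\mu^p+\nu^p)$, obtains an extra factor $2^{p-1}$, and on that basis asserts that the lemma is ``limited to the case $p=1$'' and that counterexamples exist for $p>1$ and $p=\infty$. Your observation that Minkowski's inequality (the triangle inequality for the $\ell^p$-norm on $\mathbb{R}^2$) applied to the vector $(1-t)(a_1,b_1)+t(a_2,b_2)$ closes the gap is correct, and it shows that $W_{X\times Y}$ is in fact a convex structure on $(X\times Y,d_p)$ for every $1\le p\le\infty$. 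So on this side point your analysis is sharper than the paper's; the paper's claimed restriction to $p=1$ is an artifact of the inequality it chose, not of the underlying situation.
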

\begin{proof}
The continuity of $W_{X\times Y}$ follows
 from the continuity of the convex structures $W_{X}$ and $W_{Y}.$ Let $ t\in I $ and $ (x_{i},y_{i})\in X \times Y,$ $i=1,2,3.$ By the definition of $W_{X\times Y},$ it remains to prove that
\begin{align}
\nonumber
& d_{1}\left( \left(x_{3},y_{3}\right),
\left(W_{X} \left(x_{1},x_{2};t\right),
W_{Y} \left(y_{1},y_{2};t \right)\right)
\right)\\ &\label{inefin} \qquad\qquad\qquad\leq\,
(1-t)\,d_{1}\left( \left(x_{1},y_{1}\right),
\left(x_{3},y_{3}\right)\right)+
t \,d_{1}\left( \left(x_{2},y_{2}\right),
\left(x_{3},y_{3}\right)\right).
\end{align}
However, we shall pretend
that we need to prove the inequality (\ref{inefin})
for the metric $d_{p}$ with $ 1\leq p<\infty.$
This enables us to demonstrate the difficulty in the proof for the case $p>1$ and explain
why the assertion of Lemma \ref{lemconprod}
is limited to the case $p=1.$ The metric $d_{\infty}$ is excluded for the same reason.
Of course we could simply construct counterexamples
for those cases but that would take us
outside the scope of this paper.
Now, for $1\leq p <\infty,$ we exploit the following facts:
\begin{description}
  \item (i)\hspace{0.2 cm}$W_{X}$ and $W_{Y}$ are convex structures on $X$
and $Y$ respectively.
  \item (ii)\hspace{0.2 cm}The map $x\mapsto x^{p}$ is monotonically increasing on $ [0,\infty[.$
  \item (iii)\hspace{0.2 cm}$(\mu+\nu)^p\leq 2^{p-1}\big(\mu^p+\nu^p\big),\,$
for all $\mu,\nu \geq 0.$
\end{description}
We then see that
\begin{align*}
\nonumber &\;d^{p}_{p}\left( \left(x_{3},y_{3}\right),
\left(W_{X} \left(x_{1},x_{2};t\right),
W_{Y} \left(y_{1},y_{2};t \right)\right)
\right)\\
\nonumber =&\;\big( d_{X}\left(x_{3},W_{X} \left(x_{1},x_{2};t\right)\right)\big)^{p}+
\big( d_{Y}\left(y_{3},W_{Y} \left(y_{1},y_{2};t\right)\right)\big)^{p}\\
\nonumber \leq &\;
\big( (1-t)d_{X}(x_{1},x_{3})+t d_{X}(x_{2},x_{3}) \big)^{p} +
\big( (1-t) d_{Y}(y_{1},y_{3})+t  d_{Y}(y_{2},y_{3}) \big)^{p}
\\
\nonumber \leq &\; 2^{p-1} (1-t)^{p}
\big[
\big(d_{X}(x_{1},x_{3})\big)^{p}+
\big(d_{Y}(y_{1},y_{3})\big)^{p}\big]+
2^{p-1} t^{p}
\big[ \big(d_{X}(x_{2},x_{3})\big)^{p}+
\big( d_{Y}(y_{2},y_{3})\big)^{p}
 \big]\\
= &\; 2^{p-1} \,\big[ (1-t)^{p}
d^{p}_{p}\big( \left(x_{1},y_{1}\right),
\left(x_{3},y_{3}\right)\big)+
 t^{p}
\,d^{p}_{p}\big( \left(x_{2},y_{2}\right),
\left(x_{3},y_{3}\right)\big) \big]
\end{align*}
which gives the desired inequality (\ref{inefin})
when $p=1.$
\end{proof}
Using Lemma \ref{lemconprod}, we can describe convex subsets
of convex product metric spaces.
\begin{defn}\label{zzcon}
A subset $Z$ of the convex product metric space
$\left(X\times Y, W_{X\times Y}, d_{1}\right)$
is convex if
$\,W_{X\times Y}\left(\left(x_{1},y_{1}\right),
\left(x_{2},y_{2}\right);t\right) \in Z$
for all points $(x_{1},y_{1}),(x_{2},y_{2})\in Z$
and all $t\in I.$
\end{defn}
In the light of Definition \ref{zzcon}
one can easily verify Lemma \ref{intersec} below.
\begin{lem}\label{intersec}
The intersection of any collection of
convex subsets of the convex product metric space
$\left(X\times Y, W_{X\times Y}, d_{1}\right)$
is convex.
\end{lem}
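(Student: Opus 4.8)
The plan is to argue exactly as in Takahashi's proof that an arbitrary intersection of convex subsets of a convex metric space is convex (\cite{Takahashi}, Proposition 1), now applied to the convex structure $W_{X\times Y}$ furnished by Lemma \ref{lemconprod}. The only thing that really needs to be checked is that the defining condition in Definition \ref{zzcon} is preserved under intersection, and this is a purely pointwise matter, so the proof should be short.

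Concretely, I would let $\{Z_{\alpha}\}_{\alpha\in A}$ be an arbitrary family of convex subsets of the convex product metric space $\left(X\times Y, W_{X\times Y}, d_{1}\right)$ and set $Z=\bigcap_{\alpha\in A}Z_{\alpha}$. If $Z=\emptyset$ there is nothing to prove (and if the index family $A$ is empty one interprets $Z=X\times Y$, which is convex since $W_{X\times Y}$ takes values in $X\times Y$). So assume $Z\neq\emptyset$, pick two points $(x_{1},y_{1}),(x_{2},y_{2})\in Z$, and fix $t\in I$. For every index $\alpha\in A$ we have $(x_{1},y_{1}),(x_{2},y_{2})\in Z_{\alpha}$, and since $Z_{\alpha}$ is convex in the sense of Definition \ref{zzcon} this forces $W_{X\times Y}\big((x_{1},y_{1}),(x_{2},y_{2});t\big)\in Z_{\alpha}$. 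As $\alpha$ was arbitrary, $W_{X\times Y}\big((x_{1},y_{1}),(x_{2},y_{2});t\big)\in\bigcap_{\alpha\in A}Z_{\alpha}=Z$. Hence $Z$ satisfies Definition \ref{zzcon}, i.e. $Z$ is convex.

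There is no serious obstacle here: the statement is a formal consequence of Definition \ref{zzcon} together with the fact, established in Lemma \ref{lemconprod}, that $W_{X\times Y}$ is a genuine convex structure on $\left(X\times Y,d_{1}\right)$. The only mild subtlety worth a line is the treatment of the empty intersection and of the empty index family, dispatched by the vacuous-convexity conventions noted above. One could alternatively remark that Lemma \ref{intersec} is simply the product-space instance of the general fact that, in any convex metric space, the class of convex subsets is closed under arbitrary intersections, so that no new argument is actually required.
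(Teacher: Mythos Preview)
Your argument is correct and is exactly what the paper has in mind: the authors do not spell out a proof but simply note that Lemma~\ref{intersec} is easily verified in the light of Definition~\ref{zzcon}, which is precisely the pointwise check you carried out. Your remarks on the empty intersection and empty index family are harmless extras, but the core argument matches the paper's intended (trivial) verification.
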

\begin{prop}\label{confunset}
 Let $f$ be a realvalued function
 on a convex metric space $\left(X,W_{X},d_{X}\right).$ Then
\leavevmode
\begin{enumerate}
  \item The function $f$ is $W_{X}-$convex if and only if $Epi (f)$ is a convex subset of the
convex product metric space $\left(X\times R, W_{X\times \mathbb{R}}, d_{X}+d_{\mathbb{R}}\right),$
where $W_{\mathbb{R}}$ and $d_{\mathbb{R}}$
are the usual convex structure and metric
on $\mathbb{R}$ respectively.
  \item If $f$ is $W_{X}-$convex then
 the sublevel set $S_{h}(f)$ is
a convex subset of $X$ for every $h\in \mathbb{R}.$
\end{enumerate}
\end{prop}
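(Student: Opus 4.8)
The plan is to prove the two equivalences/implications directly from the definitions, using the characterization of convexity in the product space $\left(X\times\mathbb{R}, W_{X\times\mathbb{R}}, d_1\right)$ furnished by Lemma \ref{lemconprod} and Definition \ref{zzcon}. Note first that the metric $d_X+d_{\mathbb{R}}$ named in the statement is precisely the metric $d_1$ on $X\times\mathbb{R}$, and the convex structure $W_{X\times\mathbb{R}}$ is the one built in Lemma \ref{lemconprod} from $W_X$ and the usual $W_{\mathbb{R}}(s_1,s_2;t)=(1-t)s_1+ts_2$; so Lemma \ref{lemconprod} guarantees $\left(X\times\mathbb{R}, W_{X\times\mathbb{R}}, d_1\right)$ is genuinely a convex metric space and Definition \ref{zzcon} makes sense.

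For part (1), I would argue both directions. Assume $f$ is $W_X$-convex and take two points $(x,s),(y,r)\in Epi(f)$, so $f(x)\le s$ and $f(y)\le r$, and fix $t\in I$. By definition $W_{X\times\mathbb{R}}\left((x,s),(y,r);t\right)=\left(W_X(x,y;t),(1-t)s+tr\right)$, and $W_X$-convexity gives $f\left(W_X(x,y;t)\right)\le (1-t)f(x)+tf(y)\le (1-t)s+tr$, which is exactly the statement that $W_{X\times\mathbb{R}}\left((x,s),(y,r);t\right)\in Epi(f)$; hence $Epi(f)$ is convex in the sense of Definition \ref{zzcon}. Conversely, assume $Epi(f)$ is convex. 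Given $x,y\in X$ and $t\in I$, the points $(x,f(x))$ and $(y,f(y))$ lie in $Epi(f)$, so convexity forces $\left(W_X(x,y;t),(1-t)f(x)+tf(y)\right)\in Epi(f)$, i.e. $f\left(W_X(x,y;t)\right)\le (1-t)f(x)+tf(y)$, which is $W_X$-convexity of $f$.

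For part (2), suppose $f$ is $W_X$-convex and fix $h\in\mathbb{R}$. Take $x,y\in S_h(f)$, so $f(x)\le h$ and $f(y)\le h$, and $t\in I$. Then $f\left(W_X(x,y;t)\right)\le (1-t)f(x)+tf(y)\le (1-t)h+th=h$, so $W_X(x,y;t)\in S_h(f)$; thus $S_h(f)$ is a convex subset of $X$ in the sense of Definition \ref{def1}. (One could alternatively deduce this from part (1) by intersecting $Epi(f)$ with the convex set $X\times\,]-\infty,h]$ and projecting, but the direct computation is shorter and cleaner.) There is no real obstacle here — the whole proposition is a bookkeeping exercise once the product convex structure is in place — but the one point that deserves a word of care is the identification of $d_X+d_{\mathbb{R}}$ with $d_1$ and of $W_{X\times\mathbb{R}}$ with the structure of Lemma \ref{lemconprod}, since only for $p=1$ is $W_{X\times Y}$ known to be a convex structure; I would state this compatibility explicitly at the start of the proof so that Definition \ref{zzcon} is applicable.
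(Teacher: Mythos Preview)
Your proof is correct and follows essentially the same approach as the paper's own proof: both directions of part (1) and part (2) are argued by exactly the same direct computations from the definitions, with the product convex structure of Lemma \ref{lemconprod} doing the work. Your added remark identifying $d_X+d_{\mathbb{R}}$ with $d_1$ (so that Lemma \ref{lemconprod} and Definition \ref{zzcon} apply) is a helpful clarification the paper leaves implicit, but otherwise the arguments coincide.
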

\begin{proof}
\begin{enumerate}
  \item Suppose that $f$ is $W_{X}-$convex
on $X$ and let $(x,s), (y,t)\in Epi(f).$
Then
\begin{align*}
f(W_{X}(x,y; \lambda ))\leq
(1- \lambda )  f(x)+ \lambda  f(y)
\leq (1- \lambda )  s+ \lambda  t
\end{align*}
for all $\lambda \in I.$
Therefore $\,\left(W_{X}(x,y; \lambda ),
(1- \lambda )  s+ \lambda  t\right) \in
Epi(f).$ That is
\begin{align*}
W_{X\times\mathbb{R}}
\left(\left(x,s\right),
\left(y,t\right);\lambda\right)=
\left(W_{X}(x,y;\lambda ),
W_{\mathbb{R}}(s,t;\lambda )\right)\in
Epi(f), \quad \lambda \in I.
\end{align*}
Hence $Epi (f)$ is a convex subset of $X\times R$. Conversely, suppose $Epi (f)$ is convex.
Fix $x,y\in X$ and $t\in I.$ Since $(x,f(x)),(y,f(y)) \in Epi (f),$ then
\begin{align*}
\left(W_{X} \left(x,y;t\right),
W_{\mathbb{R}}\left(f(x),f(y);t\right)\right)=
W_{X\times \mathbb{R}}\left(\left(x,f(x)\right),
\left(y,f(y)\right);t\right)\in Epi(f).
\end{align*}
Thus $\;f\left(W_{X} \left(x,y;t\right)\right)\leq
W_{\mathbb{R}}\left(f(x),f(y);t\right) =
(1-t)\,f(x)+t\,f(y),\;$ which is to say that
$f$ is $W_{X}-$convex.
\item Let $t \in I$ and let $x,y\in S_{h}(f)$ so that $f(x)\leq h$ and $f(y)\leq h.$
Since $f$ is $W_{X}-$convex then
$\,f\left(W_{X} \left(x,y;t\right)\right)\leq
(1-t)\,f(x)+t\,f(y) \leq h.$ Therefore
$W_{X} \left(x,y;t\right) \in S_{h}(f)$
and $S_{h}(f)$ is convex.
\end{enumerate}
\end{proof}
The following theorem is an application
of Lemma \ref{intersec}
and Proposition \ref{confunset}.
\begin{thm}
The pointwise supremum of an arbitrary collection of
$W-$convex functions is $W-$convex.
\end{thm}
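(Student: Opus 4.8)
The plan is to deduce this directly from the two results just established, namely the epigraph characterization of $W$-convexity in Proposition \ref{confunset}(1) and the stability of convexity under intersection in Lemma \ref{intersec}. Let $\left(f_{i}\right)_{i\in\mathcal{I}}$ be an arbitrary family of $W$-convex functions on $\left(X,W,d\right)$ and set $f=\sup_{i\in\mathcal{I}}f_{i}$. To stay within the realm of realvalued functions I would first, exactly as in item 7 of Proposition \ref{8tms}, pass to the set $M=\{x\in X:\sup_{i}f_{i}(x)<\infty\}$ (or simply assume the family is pointwise bounded above, so that $M=X$); the convexity of $M$ will fall out of the argument below, so that Proposition \ref{confunset} applies to $f$ on the convex metric space $M$.

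The key step is the elementary identity $Epi(f)=\bigcap_{i\in\mathcal{I}}Epi(f_{i})$ inside $X\times\mathbb{R}$: a pair $(x,s)$ lies in $Epi(f)$ iff $f_{i}(x)\leq s$ for every $i$, i.e. iff $(x,s)\in Epi(f_{i})$ for every $i$. Now I would invoke the ``only if'' part of Proposition \ref{confunset}(1): since each $f_{i}$ is $W$-convex, each $Epi(f_{i})$ is a convex subset of the convex product metric space $\left(X\times\mathbb{R},W_{X\times\mathbb{R}},d+d_{\mathbb{R}}\right)$. By Lemma \ref{intersec} their intersection $Epi(f)$ is again convex there. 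Reading this through the ``if'' part of Proposition \ref{confunset}(1) yields that $f$ is $W$-convex, which is the assertion. The convexity of $M$ is obtained en route: if $x,y\in M$, choosing $s\geq f(x)$ and $t\geq f(y)$ gives $(x,s),(y,t)\in Epi(f)$, hence $W_{X\times\mathbb{R}}\big((x,s),(y,t);\lambda\big)=\left(W(x,y;\lambda),(1-\lambda)s+\lambda t\right)\in Epi(f)$, which forces $f\!\left(W(x,y;\lambda)\right)\leq(1-\lambda)s+\lambda t<\infty$, i.e. $W(x,y;\lambda)\in M$.

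I do not anticipate a genuine obstacle; the only point requiring attention is the one flagged above, that an arbitrary supremum of realvalued functions may fail to be realvalued, which is handled either by hypothesis or by restricting to $M$ as in Proposition \ref{8tms}(7). Everything else is a purely formal translation back and forth through the epigraph dictionary of Proposition \ref{confunset} together with a single application of Lemma \ref{intersec}. Incidentally, this furnishes a second, more geometric proof of the supremum part of Proposition \ref{8tms}(7), bypassing the direct appeal to the completeness of $\mathbb{R}$.
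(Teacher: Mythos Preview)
Your proof is correct and follows exactly the same route as the paper: both arguments use the identity $Epi(\sup_{i}f_{i})=\bigcap_{i}Epi(f_{i})$, apply Proposition \ref{confunset}(1) to pass from $W$-convexity of each $f_{i}$ to convexity of each $Epi(f_{i})$, invoke Lemma \ref{intersec} to get convexity of the intersection, and then use Proposition \ref{confunset}(1) in the other direction. You are in fact slightly more careful than the paper, which silently assumes the supremum is realvalued, whereas you explicitly flag this and handle it via the set $M$.
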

\begin{proof}
Let $\left(X,W,d\right)$ be a convex metric space.
Let $J$ be some index set and assume that
$\{f_{i}\}_{i\in J}$ is a collection
of $W-$convex functions on $X$. Then, by
Proposition \ref{confunset},
$Epi (f_{i})$ is a convex subset of the
convex product metric space
 $\,\left(X\times R, W_{X\times \mathbb{R}},
 d_{X}+d_{\mathbb{R}}\right)\,$
for every $i\in J$.
If $\,f:X\rightarrow
\mathbb{R}\,$ is such that
$\,f(x)=\sup_{i\in J}{f_{i}(x)},\;x\in X,\,$
then $\,Epi(f)=\cap_{i \in J} Epi \left(f_{i}
\right).\,$ By Lemma \ref{intersec},
$\,Epi(f)\,$ is a convex subset of
 $\,\left(X\times R, W_{X\times \mathbb{R}},
 d_{X}+d_{\mathbb{R}}\right)\,$
and, using Proposition \ref{confunset}, it follows
that $f$ is $W-$convex on $X$.
\end{proof}
\section{\large Applications to the projection problem and fixed point theory}
Let $Y$ be a nonempty subset of a convex metric space $\left(X,W,d\right)$. The distance map
(cf. \cite{Papadopoulos}) $d_{Y}:X\rightarrow [0,\infty[$ is defined by $\,d_{Y}(x)=\inf_{y\in Y}{d(x,y)}.$ The distance map $d_{Y}$ is $W-$convex. Indeed, if $x_{1},x_{2}\in X,$ $y\in Y$ and $t\in I$ then, by the definition of $d_{Y}$, we have
\begin{align*}
d_{Y}(W\left( x_{1},x_{2}; t\right))\leq
\;d\left(W\left( x_{1},x_{2}; t\right),y\right)
\leq (1-t)\,d(x_{1},y)+t\,d(x_{2},y)
\end{align*}
for every $y\in Y$. Hence, by positive homogeneity and subadditivity of the infimum,
\begin{align*}
d_{Y}(W\left( x_{1},x_{2}; t\right))
\leq &\;\inf_{y\in Y}\big ((1-t)\,d(x_{1},y)+t\,d(x_{2},y)\big)\\
\leq &\;(1-t)\, \inf_{y\in Y}d(x_{1},y)+
t\, \inf_{y\in Y}d(x_{2},y)\\
= &\;(1-t)\, d_{Y}(x_{1})+t\, d_{Y}(x_{2}).
\end{align*}
If $Y$ is convex, then the metric projection operator (also called the nearest point mapping) (cf. \cite{Li})
$P_{Y}:X\rightarrow 2^{Y}$ is given by
$\;P_{Y}(x):=\big\{y\in Y: d(x,y)=d_{Y}(x) \big\}$.
If $P_{Y}(x)\neq \emptyset\,$ for every $x\in X$ then $Y$ is called proximal.
$P_{Y}(x)$ is convex (\cite{beg}, Lemma 3.2) and if $Y$ is closed then it is proximal. The proof of the proximality of $Y$ in this case is standard and given, in the setting of normed spaces, in many books  (cf. \cite{Papadopoulos,cb1}).
We briefly sketch it here. There exists a minimizing sequence $ (y_{n})\subset Y $ such that $d(x,y_{n})\rightarrow d_{Y}(x),\;x\in X, $ as $n\rightarrow \infty$. So the sequence $ (y_{n}) $ is bounded and, up to replacing it by a subsequence, it converges to $y$, say. Consequently, $d(x,y_{n})\rightarrow d(x,y)$
as $n\rightarrow \infty$. Hence $d(x,y)=d_{Y}(x)$.
Since $Y$ is closed then $y\in P_{Y}(x)$.\\
\indent The set of metric projections $P_{Y}(x)$, if nonempty, is not necessarily a singleton.
If $P_{Y}(x)$ is a singleton for each $x\in X$ then  the convex set $Y$ is called a Chebyshev set.
It is well-known (cf. \cite{cb3}) that
every closed convex subset of a strictly convex and reflexive Banach space is a Chebyshev set. \\
\indent We would like to describe sufficient conditions
for a point $x\in X$ to have a unique projection in $Y$.
We begin with introducing a definition for strict convexity in convex metric spaces.
\begin{defn}\label{dd11}
A convex metric
space $(X,W,d)$ is strictly convex
if for each $x_{0}\in X$ and any two distinct points $x,y\in S\left(x_{0},\rho\right)\;$
 with $\rho>0,\,$ we have $\,W(x,y;t)\in B\left(x_{0},\rho\right),\;\; \forall\;t \in I^{\tiny o}.$
\end{defn}
\begin{rem}
If $X$ is a linear space endowed with
a norm that induces the metric $d$ and
$W$ is given by (\ref{cs}) then Definition \ref{dd11},
after normalizing and translating to the origin,
coincides with the known definition of
strictly convex normed spaces \cite{cb3}.
\end{rem}
\begin{defn}\label{dd22}( (Strict) $W-$convexity w.r.t spheres).
Let $(X,W,d)$ be a convex metric space.
Fix $x_{0}\in X,$  $\rho>0$ and
 $ \sigma\in\,]0,\rho[.\,$
We call a realvalued function $f$ on
$ \overline{B\left(x_{0},\rho \right)}$
$W-$convex w.r.t the sphere $\,S\left(x_{0}, \sigma\right)\,$ if
\begin{align*}
f\left(W(x,y;t)\right)\leq
(1-t)\,f(x)+t\,f(y),\;\; \forall\; x,y\in
S\left(x_{0}, \sigma\right), \, t\in I,
\end{align*}
and we call it strictly $W-$convex w.r.t the sphere $\,S\left(x_{0}, \sigma\right)\,$ if
\begin{align*}
f\left(W(x,y;t)\right)<
(1-t)\,f(x)+t\,f(y),\;\; \forall\; x,y\in
S\left(x_{0}, \sigma\right)\;\text{with}\;
x\neq y,\;\forall\; t\in I^{\tiny o}.
\end{align*}
\end{defn}
The following proposition is a direct consequence of Definitions \ref{dd11} and \ref{dd22}.
\begin{prop}
Let $(X,W,d)$ be a convex metric space.
If for each $x_{0}\in X$ and $\rho>0,$
the function $f:X\rightarrow [0,\infty[\,$
defined by $\,f(x):=d(x,x_{0})\,$
is strictly $W-$convex w.r.t the sphere
$S\left(x_{0}, \rho\right)$ then the space
$X$ is strictly convex.
\end{prop}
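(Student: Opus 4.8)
The plan is to show that the hypothesis forces exactly the defining inequality of Definition \ref{dd11}, essentially by unwinding the two definitions involved. First I would fix an arbitrary center $x_{0}\in X$ and radius $\rho>0$, then take two distinct points $x,y\in S\left(x_{0},\rho\right)$ and a parameter $t\in I^{\tiny o}$; the goal is to verify that $W(x,y;t)\in B\left(x_{0},\rho\right)$. The key observation is that the function $f(\cdot)=d(\cdot,x_{0})$ takes the constant value $\rho$ on the sphere $S\left(x_{0},\rho\right)$, so that $f(x)=f(y)=\rho$.

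Next I would invoke the standing hypothesis, namely that $f$ is strictly $W-$convex w.r.t. the sphere $S\left(x_{0},\rho\right)$ in the sense of Definition \ref{dd22}. Since $x\neq y$ and $t\in I^{\tiny o}$, this gives
\begin{align*}
d\left(W(x,y;t),x_{0}\right)=f\left(W(x,y;t)\right)<(1-t)\,f(x)+t\,f(y)=(1-t)\,\rho+t\,\rho=\rho,
\end{align*}
so that $W(x,y;t)\in B\left(x_{0},\rho\right)$. As $x_{0}$, $\rho$, $x$, $y$ and $t$ were arbitrary (subject to the stated restrictions), this is precisely the condition in Definition \ref{dd11}, and hence $X$ is strictly convex.

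I do not expect any genuine obstacle here: the statement is a direct consequence of the two definitions, the only thing to be careful about being that the hypothesis is assumed to hold uniformly over all centers $x_{0}\in X$ and all radii $\rho>0$, which is exactly what is needed to match the universal quantifiers appearing in Definition \ref{dd11}. The one technical point worth a remark is that the sphere $S\left(x_{0},\rho\right)$ lies in the domain of $f$ (indeed $f$ is defined on all of $X$), so the application of Definition \ref{dd22} is legitimate.
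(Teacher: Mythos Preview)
Your proof is correct and is precisely the direct verification the paper has in mind: it states that the proposition ``is a direct consequence of Definitions \ref{dd11} and \ref{dd22}'' and gives no further argument. Your unwinding of the two definitions, using that $f$ is constantly equal to $\rho$ on $S(x_{0},\rho)$ so that the strict inequality of Definition \ref{dd22} yields $d\left(W(x,y;t),x_{0}\right)<\rho$, is exactly that consequence.
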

The following theorem asserts that closed convex subsets of strictly convex metric spaces are
Chebyshev sets.
\begin{thm}\label{thm4}
Assume that $Y$ is a closed convex subset
of a strictly convex metric space $(X,W,d)$.
Then every $x\in X$ has a unique projection
on $Y$.
\end{thm}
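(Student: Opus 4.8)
The plan is to combine the existence of a metric projection (which follows from closedness of $Y$, as sketched in the paragraph preceding Definition \ref{dd11}) with a uniqueness argument driven by the strict convexity of the space. So the only real content is uniqueness: suppose $x \in X$ and $y_1, y_2 \in P_Y(x)$ with $y_1 \neq y_2$; I must derive a contradiction.

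**First** I would dispose of the trivial case. If $d_Y(x) = 0$, then $d(x,y_1) = d(x,y_2) = 0$, forcing $y_1 = x = y_2$, contradicting $y_1 \neq y_2$; so I may assume $\rho := d_Y(x) = d(x,y_1) = d(x,y_2) > 0$. Then $y_1, y_2$ are two distinct points on the sphere $S(x,\rho)$. **Next**, set $m := W(y_1, y_2; \tfrac12)$. Since $Y$ is convex, $m \in Y$, so $d(x,m) \geq d_Y(x) = \rho$. On the other hand, by Definition \ref{dd11} applied with center $x_0 = x$ and radius $\rho$ (the two distinct points $y_1, y_2$ lie on $S(x,\rho)$), we get $m = W(y_1,y_2;\tfrac12) \in B(x,\rho)$, i.e.\ $d(x,m) < \rho$. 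These two inequalities are incompatible, which is the desired contradiction. Hence $P_Y(x)$ is a singleton for every $x \in X$, i.e.\ $Y$ is a Chebyshev set.

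**The main subtlety** — and the one point worth spelling out carefully in the write-up — is the appeal to existence: one must note that $P_Y(x) \neq \emptyset$, which is exactly the proximality of $Y$ established (for closed $Y$) by the minimizing-sequence argument recalled earlier in this section; that argument uses only completeness-free compactness of bounded minimizing sequences in the relevant sense, and the excerpt already takes it for granted, so I would simply cite it. A second minor point is to make sure $W(y_1,y_2;\tfrac12)$ is interpreted correctly when $W$ is multivalued: every element of the set $W(y_1,y_2;\tfrac12)$ lies in $B(x,\rho)$ by strict convexity, and every such element lies in $Y$ by convexity of $Y$, so picking any representative already yields the contradiction. No further machinery is needed; the whole proof is about two lines once existence is granted.
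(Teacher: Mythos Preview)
Your proposal is correct and follows essentially the same route as the paper: cite closedness of $Y$ for existence, then for uniqueness place two hypothetical distinct projections $y_1,y_2$ on the sphere $S(x,\rho)$ with $\rho=d_Y(x)>0$, and use strict convexity (Definition~\ref{dd11}) to force $W(y_1,y_2;t)\in B(x,\rho)\cap Y$, contradicting the minimality of $\rho$. The only cosmetic differences are that the paper takes a generic $t\in I^{\circ}$ rather than $t=\tfrac12$ and disposes of the trivial case by noting $x\in Y\Rightarrow P_Y(x)=\{x\}$ rather than via $d_Y(x)=0$.
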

\begin{proof}
Since $Y$ is closed then $\,P_{Y}(x)\neq \emptyset,\;\forall\:x \in X\,$ by the discussion above.
If $x\in Y$ then $P_{Y}=\{x\}.$ Let $x\in X-Y $
have two distinct projections $ y_{1},y_{2} \in Y$.
Then $\,d(x,y_{1})=d(x,y_{2})=d_{Y}(x)$. Let $t\in I^{\tiny 0}.$ Since $Y$ is convex then
$W(y_{1},y_{2};t) \in Y$, and since $X$ is
strictly convex then
\begin{align*}
d\left(W(y_{1},y_{2};t),x\right)
<(1-t)\,d(y_{1},x)+t\,d(y_{2},x)=
d_{Y}(x),
\end{align*}
which is a contradiction.
\end{proof}
\begin{thm}
Let $Y$ be a compact convex subset
of a strictly convex complete metric space. If $\,f: Y\rightarrow Y\,$ is continuous then it has a fixed point in $Y$.
\end{thm}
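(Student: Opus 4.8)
The plan is to deduce this from Schauder's fixed point theorem by embedding the problem into a Banach space. The classical Schauder theorem states that a continuous self-map of a compact convex subset of a normed (or locally convex) linear space has a fixed point, and the compactness of $Y$ is exactly what we have at our disposal. The difficulty is that $Y$ lives in a convex metric space $(X,W,d)$, which need not be linear, so Schauder's theorem does not apply directly. The natural device is the Kuratowski (or Arens--Eells) isometric embedding: any metric space embeds isometrically into a Banach space, for instance $Y$ embeds isometrically into $C(Y)$, the space of bounded continuous real functions on $Y$ with the sup norm, via $y\mapsto d(\cdot,y)-d(\cdot,y_0)$ for a fixed basepoint $y_0$. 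Since $Y$ is compact, its isometric image $\widehat{Y}$ is a compact subset of the Banach space $C(Y)$.

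First I would fix the isometric embedding $\iota:Y\hookrightarrow C(Y)$ and set $\widehat Y=\iota(Y)$, which is compact. Next I would transport $f$ to $\widehat f=\iota\circ f\circ\iota^{-1}:\widehat Y\to\widehat Y$, which is continuous. The obstacle here is that $\widehat Y$ need not be convex in $C(Y)$, so I would instead pass to $K=\overline{\operatorname{conv}}(\widehat Y)$, the closed convex hull of $\widehat Y$ in $C(Y)$; by Mazur's theorem the closed convex hull of a compact set in a Banach space is compact, so $K$ is a compact convex set. However, $\widehat f$ is only defined on $\widehat Y$, not on all of $K$, so the final step is to extend $\widehat f$ to a continuous map $F:K\to\widehat Y\subseteq K$. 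This can be done using the fact that $C(Y)$ is a normed linear space and $\widehat Y$ is compact: one may use a Dugundji-type extension, or more simply observe that a continuous map from a compact metric space into a subset of a Banach space extends continuously to the closed convex hull whenever that subset is itself convex — which fails in general, so the cleaner route is to invoke Dugundji's extension theorem to extend $\widehat f:\widehat Y\to C(Y)$ to a continuous $\widetilde F:K\to\operatorname{conv}(\widehat Y\text{'s image})\subseteq K$, noting that Dugundji's theorem preserves membership in the convex hull of the range. Then $\widetilde F:K\to K$ is a continuous self-map of a compact convex subset of a normed space, so Schauder gives a fixed point $k^*=\widetilde F(k^*)$; but $\widetilde F$ takes values in $\widehat Y$, hence $k^*\in\widehat Y$, so $k^*=\widehat f(k^*)$, and $\iota^{-1}(k^*)\in Y$ is a fixed point of $f$.

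The main obstacle I anticipate is the extension step: making sure the extension $\widetilde F$ of $\widehat f$ off of $\widehat Y$ still maps into $K$ (equivalently, that one does not leave the compact convex set). Dugundji's theorem delivers exactly this — it extends continuous maps from a closed subset of a metric space into a convex subset $L$ of a locally convex space so that the extension's range lies in $\operatorname{conv}$ of the original range; taking $L=K$ and noting $\operatorname{conv}(\widehat f(\widehat Y))\subseteq\operatorname{conv}(\widehat Y)\subseteq K$ closes the gap. I should also remark that the hypotheses ``strictly convex'' and ``complete'' on the ambient space are not actually needed for this argument — compactness and convexity of $Y$ together with continuity of $f$ suffice — so either the theorem can be stated with weaker hypotheses, or a more elementary self-contained argument (approximating $Y$ by finite $W$-convex combinations and running a finite-dimensional Brouwer argument inside $Y$ itself, using the convex structure $W$ to build the requisite retractions) could be given; but the embedding route above is the shortest rigorous path and the one I would write up.
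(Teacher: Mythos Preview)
Your argument has a genuine gap at the step where you conclude that the Schauder fixed point lies in $\widehat Y$. Dugundji's theorem extends $\widehat f:\widehat Y\to C(Y)$ to a continuous $\widetilde F:K\to C(Y)$ whose range lies in $\operatorname{conv}\bigl(\widehat f(\widehat Y)\bigr)\subseteq K$, \emph{not} in $\widehat Y$ itself. You acknowledge this in your final paragraph, yet the preceding sentence asserts ``$\widetilde F$ takes values in $\widehat Y$'', which is exactly what Dugundji does \emph{not} give. So Schauder produces $k^\ast\in K$ with $k^\ast=\widetilde F(k^\ast)$, but there is no reason $k^\ast\in\widehat Y$, and if $k^\ast\notin\widehat Y$ then $\widetilde F(k^\ast)$ is the extended value, not $\widehat f(k^\ast)$, so nothing about $f$ is concluded. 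The obvious repair --- compose with a retraction $r:K\to\widehat Y$ --- fails too, since a compact set need not be a retract of its closed convex hull (think of a circle inside a disk).

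The deeper issue is that your route discards the convex structure $W$ entirely: after the Kuratowski embedding, $\widehat Y$ is just a compact metric subset of $C(Y)$ with no linear convexity, and compact metric spaces do not in general have the fixed point property. This is also why your closing remark that strict convexity and completeness are superfluous should have been a warning sign: those hypotheses are doing real work. The paper's proof proceeds quite differently: compactness of $Y$ gives closedness, Theorem~\ref{thm4} (which uses strict convexity) then shows $Y$ is a Chebyshev set, and the fixed point conclusion is obtained by invoking Theorem~3.4 and Corollary~3.5 of Beg~\cite{beg}, where the nearest-point projection furnished by the Chebyshev property is the key tool. So the convex-metric structure, via the uniqueness of metric projections, is used in an essential way that your embedding strategy cannot replicate.
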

\begin{proof}
Since $Y$ is compact then it is closed
and, by Theorem \ref{thm4} above, $Y$ is a Chebyshev set.
The rest of the proof follows from Theorem 3.4 and Corollary 3.5 in \cite{beg}.
\end{proof}
\begin{thm}\label{thm555}
Let $(X,W,d)$ be a convex metric space
and let $\,T:X \rightarrow X\,$
is a nonexpansive mapping. Assume that the function $\,f:X\rightarrow [0,\infty[\,$ defined by $\,f(x):=d\left(x,Tx\right)$ is strictly $W-$convex with a local minimum at $\,\xi \in X$. Then $\xi$ is a fixed point of $ T$.
\end{thm}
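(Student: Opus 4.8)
The plan is to argue by contradiction. Suppose $\xi$ is \emph{not} a fixed point of $T$, so that $f(\xi)=d(\xi,T\xi)>0$; in particular $\xi$ and $T\xi$ are two distinct points of $X$.

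The first step is to record what nonexpansiveness of $T$ gives when applied at the pair $\xi,\,T\xi$:
\[
f(T\xi)=d\big(T\xi,\,T(T\xi)\big)\le d(\xi,T\xi)=f(\xi).
\]
The second step is to walk along the generalized segment from $\xi$ to $T\xi$. For each $t\in I^{\tiny o}=\,]0,1[\,$ set $z_{t}=W(\xi,T\xi;t)$. Since $f$ is strictly $W$-convex and $\xi\neq T\xi$, Definition \ref{defnconv} together with the estimate just obtained yields
\[
f(z_{t})<(1-t)\,f(\xi)+t\,f(T\xi)\le (1-t)\,f(\xi)+t\,f(\xi)=f(\xi),
\]
so that $f(z_{t})<f(\xi)$ for every $t\in\,]0,1[$.

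The third step is to localize this strict decrease near $\xi$. By Lemma \ref{113} we have $d(\xi,z_{t})=t\,d(\xi,T\xi)\to 0$ as $t\to 0^{+}$. Because $\xi$ is a local minimizer of $f$, there is an open ball $B(\xi,r)$, $r>0$, on which $f\ge f(\xi)$; choosing $t\in\,]0,1[$ with $t<r/d(\xi,T\xi)$ forces $z_{t}\in B(\xi,r)$, hence $f(z_{t})\ge f(\xi)$, which contradicts $f(z_{t})<f(\xi)$. Therefore $d(\xi,T\xi)=0$, i.e. $T\xi=\xi$.

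I do not expect a serious obstacle here; the only point that needs care is the interplay of the two inequalities. Strict $W$-convexity by itself merely propagates the value $f(\xi)$ along $\mathcal{L}(\xi,T\xi)$ with a strict ``drop'' toward the weighted average of $f(\xi)$ and $f(T\xi)$, so it is precisely the nonexpansiveness bound $f(T\xi)\le f(\xi)$ that turns this drop into a genuine decrease below $f(\xi)$ at points arbitrarily close to $\xi$. Note that ordinary $W$-convexity would not suffice, since it would permit $f$ to be constant, equal to $f(\xi)$, along that segment, and no contradiction with local minimality would arise.
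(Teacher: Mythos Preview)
Your proof is correct and follows the same contradiction strategy as the paper's: bound $f(T\xi)\le f(\xi)$ via nonexpansiveness, then use strict $W$-convexity to force $f\big(W(\xi,T\xi;t)\big)<f(\xi)$ and contradict minimality. The only difference is that the paper first invokes Proposition~\ref{8tms} to upgrade the local minimum at $\xi$ to a (unique) global minimum before reading off the contradiction, whereas you bypass that step and contradict local minimality directly by using Lemma~\ref{113} to place $z_{t}$ inside $B(\xi,r)$ for small $t$; the two routes are equivalent, and yours is arguably a bit more self-contained.
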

\begin{proof}
By proposition \ref{8tms},
the point $\xi$ is the unique global minimizer
of $f$. Suppose that
$\,T \xi \neq \xi.$ Since $X$ is convex then
$W\left(\xi,T \xi; t\right) \in X\;\forall\;t\in I,\,$ and since $f$ is strictly $W-$convex on $X$ then, for all $\,t \in I^{\tiny o},\,$ we have
\begin{eqnarray}
\nonumber f\left(W\left(\xi,T \xi; t\right)\right)                &<& (1-t)\,f(\xi)+t\,f(T \xi) \\
\nonumber &=&  (1-t)\,d(\xi, T \xi)+t\,d(T \xi, T^{2} \xi)\\
\label{strff}
&\leq&  (1-t)\,d(\xi, T \xi)+t\,d( \xi, T \xi)
\,=\,d(\xi, T \xi)\,=\,f(\xi),
\end{eqnarray}
where we used nonexpansiveness of $f$
in estimating $\,d(T \xi, T^{2} \xi)\leq d(\xi, T \xi).$ The strict inequality (\ref{strff})
contradicts the fact that $\,f(\xi)=\min_{x\in X} f(x)$. Therefore we must have $\,T \xi= \xi.$
\end{proof}
\begin{rem}
The function $f$ is continuous
by the continuity of $\,T$. Hence, if
$X$ is compact then there does exist
a point $\xi \in X$ such that
$\,f(\xi)=\min_{x\in X} f(x),\,$
and we do not need to make such an
assumption on $f$.
\end{rem}

%%%%%%%%%%%%%%%%%%%%%%%%%%%%%%%%%%%%%%%%%%%%%%%%%%%%%%%%%%%%%%%%%%%%%%%%%%%%%%%%%%%%%%%%%%%%%%%%%%%%%

\end{document}